\newtheorem{theorem}{Theorem}[section]
\newtheorem{corollary}[theorem]{Corollary}
\newtheorem{lemma}[theorem]{Lemma}
\theoremstyle{definition}
\newtheorem{definition}[theorem]{Definition}
\newtheorem{example}[theorem]{Example}
\theoremstyle{remark}
\numberwithin{equation}{section}
\DeclareMathOperator{\diam}{diam}
\DeclareMathOperator{\lin}{lin}
\DeclareMathOperator*{\esssup}{ess\,sup}
\newcommand\R{\ensuremath{\mathbb R}}
\newcommand\N{\ensuremath{\mathbb N}}
\newcommand\map[3]{\ensuremath{{#1}\,\colon \,{#2}\to{#3}}}
\newcommand{\subnorm}[2]{{{\left \|#2\right \|}_{#1}}}
\newcommand{\norm}[1]{{{\subnorm{{}}{#1}}}}
\newcommand{\abs}[1]{\lvert#1\rvert}
\newcommand\dt{\ensuremath{{\,\mathrm{d}t}}}
\newcommand\ds{\ensuremath{{\,\mathrm{d}s}}}
\newcommand\dx{\ensuremath{{\,\mathrm{d}x}}}
\newcommand\dy{\ensuremath{{\,\mathrm{d}y}}}
\newcommand\dd{\ensuremath{{\,\mathrm{d}}}}
\newcommand\scp[2]{{\left({#1},{#2} \right)}}
\newcommand\dup[2]{{\langle{#1},{#2} \rangle}}
\newcommand{\pprime}{{\prime\prime}}
\newcommand{\laplace}{\Delta}
\newcommand{\ale}{a.e.}
\newcommand*{\lcdot}{\text{\raisebox{-0.4ex}{\scalebox{1.75}{$\cdot$}}}}%
\newcommand{\assign}{\coloneqq}
\newcommand{\assigns}{\eqqcolon}
\renewcommand{\phi}{\varphi}
\renewcommand{\epsilon}{\varepsilon}
\title{Reconstruction of a Time-dependent Potential\\ from Wave Measurements}
\author{Thies Gerken\thanks{RTG 2224 \enquote{Parameter Identification - Analysis, Algorithms, Applications} and Center for Industrial Mathematics, Universit\"at Bremen, Germany; \texttt{tgerken@math.uni-bremen.de}} \and Armin Lechleiter\thanks{Center for Industrial Mathematics, Universit\"at Bremen, Germany; \texttt{lechleiter@uni-bremen.de}}}
\date{\today}
\begin{document}

\maketitle

\begin{abstract}
  We add a time-dependent potential to the inhomogeneous wave equation and consider the task of reconstructing this potential from measurements of the wave field. This dynamic inverse problem becomes more involved compared to static parameters, as, e.g.~the dimensions of the parameter space do considerably increase.
  We give a specifically tailored existence and uniqueness result for the wave equation and compute the Fr\'echet derivative of the solution operator, for which also show the tangential cone condition.
  These results motivate the numerical reconstruction of the potential via successive linearization and regularized Newton-like methods.
  We present several numerical examples showing feasibility, reconstruction quality, and time efficiency of the resulting algorithm.
\end{abstract}

%\tableofcontents

\section{Introduction}

We consider the inhomogeneous wave equation in a bounded time-space domain $[0,T] \times \Omega$ with a time- and space-dependent potential $c$ and a source $f$,
\begin{equation}\label{eq:wavEqu}
  u^\pprime - \Delta u + cu = f \quad \text{in } [0,T] \times \Omega.
\end{equation}
For this setting, we tackle the dynamic inverse problem to reconstruct $c$ from measurements of $u$ at specific measurement points for many time steps.
This inverse problem provides a simplified model for the non-destructive testing via time-dependent waves in dynamic environments of, e.g.~complex carbon-fiber-reinforced polymers under loadings; it is additionally crucial for the detection of non-linear terms in the wave equations from merely an approximate linear model.

Our aim is to show that this dynamic inverse problem for the time-varying quantity $c$ can be mathematically rigorously formulated, analyzed, and stably solved by successive linearization in reasonable computation time.
Thus, we first construct suitable function spaces for the coefficients and the solutions to solve the latter partial differential equation with homogeneous initial and boundary conditions in $n=1$, $2$, or $3$ dimensions.
Then we show that the parameter-to-solution map is Fr\'echet differentiable on a suitable domain of definition.

The inverse problem to determine $c$ from full measurements of $u$, as well as its linearization, both turn out to be ill-posed. As we can however show that for our setting the tangential cone condition of Scherzer~\cite{Scherzer95} is satisfied, we consider successive linearization as the starting point for an inversion algorithm.

To be able to cope with the more important case of reduced point measurements of the wave field, too, we compute the necessary operator adjoints and finally detail several numerical experiments computed by the so-called REGINN algorithm of Rieder~\cite{rieder:cgreginn} when applied to the inverse problem.
Roughly speaking, these experiments show that the observable region in space is determined by the excitations and the sensor positions up to errors due to the noise level.

There are not so many papers in the literature tackling inverse problems for space- and time-dependent parameters of a wave equation.
On the theoretical side, there are a couple of papers proving uniqueness result for various types of coefficients and data, see, e.g.~\cite{Stefanov1989, Ramm1991}, together with several more recent works that particularly indicate the rising interest in the topic, see~\cite{Kian2016, Aicha2015, Salazar2013, Eskin2007}.
The main tool of many of these papers are geometric optics solutions.
Concerning numerical algorithms, there does not seem to exist a similar variety of results, apart from the detection of time-dependent (point) sources for the wave equation, see, e.g.~\cite{Badia2001}.
We would like to further note reconstruction results for non-linear elastic materials in~\cite{Binder2015} indicating future potential fields of application for the algorithms from this paper.

Our solution theory for~\eqref{eq:wavEqu} follows the weak solution theory of Lions und Magenes~\cite{lionsmagenes:bvp1} as the latter can also be used for more complicated problems than considered in this paper.
This weak solution theory shows existence and uniqueness of solution to~\eqref{eq:wavEqu} for all $c\in H^2([0,T], L^2(\Omega))$ that are bounded from below by some (possibly negative) $c_0\in \R$ and all square-integrable $f$.

A similar framework provided in Evans' book~\cite[Chapter 7.2]{evans:pde} requires $c\in C^1([0,T]\times \overline{\Omega})$.
If one aims to embed the latter space in some Hilbert- or at least in some reflexive Banach space, one typically ends up in a high-order Sobolev space.
Thus, resulting reconstructed parameters will then typically possess some extra spatial smoothness, which is somewhat inconvenient from the point of view of applications.

The above-mentioned set of suitable parameters for our solution theory is not yet ready to show, e.g.~Fr\'echet differentiability of the solution operator $c \mapsto u$, but by well-known analytic tools we prove that a suitable open domain for such a derivative exists.

Of course, more general settings than~\eqref{eq:wavEqu} should include more general variable coefficients, which is however out of this paper's scope.
%Note however that existence theory for general time-dependent parameters can be shown as long as the bi-linear form that determines the weak solution is bounded and coercive and all required time derivatives are bounded independently of time.
Similarly, regularization methods that can be rigorously formulated in Banach spaces to the inversion problem under consideration is a natural continuation of this work that will be considered in a future work.

The remainder of this paper is structured as follows:
Sections~\ref{sec:solutiontheory} and~\ref{sec:frechet} treat weak solutions and Fr\'echet derivatives for the wave equation.
Ill-posedness of the resulting inverse problems is shown in Section~\ref{sec:illposed}.
Sections~\ref{sec:discWave} and~\ref{sec:adjoints} consider the discretization of all operators and their adjoints that are involved in the setting.
Finally, Section~\ref{sec:reginn} details the inversion algorithm and Section~\ref{sec:numerics} presents numerical examples.

\emph{Notation:} If there is no danger of confusion, we write $L^p$ instead of $L^p(\Omega)$, and analogously $H^k$ instead of $H^k(\Omega)$ for Sobolev spaces; corresponding spaces of functions with zero traces get an additional index $0$.
Further, $\dup{\cdot}{\cdot}$ is the duality product between $H^{-1}\assign (H^1_0){\vphantom{1pt}}^\prime$ and $H^1_0$ and $\scp{\cdot}{\cdot}$ is the scalar product of $L^2$.
We identify $L^2$ with its dual space such that we typically work in the Gelfand triple $H^1_0\subset L^2 \subset H^{-1}$.
We do not distinguish between scalar and vector-valued functions and, e.g.~also write $\scp{\nabla u}{\nabla v}$ for the ${L^2(\Omega)}^n$-scalar product of $\nabla u$ and~$\nabla v$.

\section{Existence of solution to the wave equation for time-de\-pend\-ent pa\-ra\-me\-ters}
\label{sec:solutiontheory}

In this section, we show existence theory for the wave equation with time-dependent parameter $c=c(t,x)$ with $t \in [0,T]$ for some $T>0$ and $x$ in a bounded Lipschitz domain $\Omega\subset \R^n$ for $n\in \{1,2,3\}$.
The system excitation is modeled by a source $f\in L^2([0,T] \times \Omega)$, such that the initial boundary value problem for the wave field $u$ reads
\begin{equation}\label{eq:arwp}
  \begin{cases}
    u^\pprime(t,x) - \laplace u(t,x) + c(t,x)u(t,x) = f(t,x)\qquad & (t,x) \in (0,T) \times \Omega, \\
    u(0,x) = u^\prime(0,x) = 0 \qquad & x\in \Omega, \\
    u(t,x) = 0 \qquad & (t,x)\in (0,T)\times\partial \Omega.
  \end{cases}
\end{equation}
We initially require $c\in L^2([0,T], L^2)$, but we finally will require more regularity of this parameter.
Multiplication of the wave equation in~\eqref{eq:arwp} by a space-dependent test function $\phi\in H^1_0$, integration over $\Omega$, and partial integration yields the following definition of a weak solution to~\eqref{eq:arwp}.

\begin{definition}\label{def:schwacheloesung}
  A function $u\in H^2([0,T], H^{-1}) \cap H^1([0,T], L^2) \cap L^2([0, T], H^1_0)$ is a weak solution to~\eqref{eq:arwp} if
  \begin{equation}\label{eq:theo:schwachelsgalt}
  \dup{u^\pprime(t)}{\phi} + \int_{\Omega} \left[ \nabla u(t) \cdot \nabla \phi + c(t) u(t) \phi \right] \dx = \scp{f(t)}{\phi}
  \end{equation}
  holds for almost every (a.e.) $t\in[0,T]$ and all $\phi\in H^1_0$, and if $u$ satisfies the initial conditions $u(0)=0$ in $L^2$ and $u^\prime(0) = 0$ in $H^{-1}$.
\end{definition}

Note that the integral in~\eqref{eq:theo:schwachelsgalt} involving $c$ is well defined by the smoothness of both $u(t)$ and $\phi(t) \in H^1_0$:
For $n\leq 3$ the embedding $H^1_0 \hookrightarrow L^4$ is continuous, such that the integrand is at least in $L^1$.
Further, the initial conditions for $u(0)$ and $u^\prime(0)$ are well defined because $u \in H^1([0,T], L^2) \hookrightarrow C([0,T], L^2)$
and $u^\prime \in H^1([0,T], H^{-1}) \hookrightarrow C([0,T], H^{-1})$, such that $u(0)$ naturally belongs to $L^2$ and $u^\prime(0)\in H^{-1}$.

If we set
\[
\map a{[0,T]\times H^1_0 \times H^1_0}{\R},\,\quad   a(t, u, v) \assign \int_{\Omega} \left[ \nabla u \cdot \nabla v + c(t) u v \right] \dx,
\]
then the weak formulation from the last Definition~\ref{def:schwacheloesung} is equivalent to
\begin{equation}\label{eq:theo:schwachelsg}
  \dup{u^\pprime(t)}{\phi} + a(t, u(t), \phi) = \scp{f(t)}{\phi} \quad \text{for all } \phi\in H^1_0 \text{ and \ale\ $t\in[0,T]$}.
\end{equation}
To construct such a weak solution, we proceed by Galerkin approximation in finite-di\-men\-si\-o\-nal subspaces of $H^1_0$.
To this end, choose some orthogonal basis $(\phi_j)_{j\in\N}$ of $H^1_0$ that is at the same time an orthonormal basis of $L^2$ (e.g.~via the eigenfunctions of the Laplacian).
Working with the Gelfand triple $H^1_0 \subset L^2 \subset H^{-1}$, the equalities $\dup{\phi_i}{\phi_j}=\delta_{ij}$ imply that the $\phi_j$ are also dual and normalized to each other for the duality product between $H^{-1}$ and $H^1_0$.

Plugging the finite-dimensional ansatz $u_m(t) \assign \sum_{j=1}^m \alpha_j(t) \phi_j
\in \lin \{\phi_j \,|\, j=1,\dots, m\}$ for some $m\in\N$ into~\eqref{eq:theo:schwachelsg}, we note that $u_m$ needs to solve
\begin{equation}\label{eq:theo:galerkin}
  \dup{u_m^\pprime(t)}{\phi} + a(t, u_m(t), \phi) = \scp{f(t)}{\phi} \quad \text{for all } \phi\in \lin \{\phi_j \, |\, j=1,\dots, m \}
\end{equation}
with zero initial conditions.
As is well-known for parameters that are constant in time, this yields $m$ ordinary differential equations for the coefficients $\alpha_j$ for $j=1,\dots,m$ with right-hand sides $t \mapsto \scp{f(t)}{\phi_i}$ in $L^2([0,T])$ that are all uniquely solvable in $H^2([0,T])$.

\begin{lemma}
  For $c\in L^2([0, T], L^2)$ and $m\in\N$ there is $u_m\in H^2([0,T], H^1_0(\Omega))$ with zero initial conditions that solves~\eqref{eq:theo:galerkin} for a.e.~$t\in[0,T]$.
\end{lemma}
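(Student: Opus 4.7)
The plan is to reduce the Galerkin equation (2.5) to a system of $m$ second-order linear ODEs for the coefficients $\alpha_1,\dots,\alpha_m$ and then invoke a Carathéodory-type existence result, since the potential coefficient $c(t)$ only lives in $L^2([0,T],L^2)$ and hence cannot be handled by the classical Picard–Lindelöf theorem.

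First I would plug the ansatz $u_m(t)=\sum_{j=1}^m \alpha_j(t)\phi_j$ into~\eqref{eq:theo:galerkin} and test against each basis function $\phi_i$, $i=1,\dots,m$. Because $(\phi_j)$ is simultaneously an orthonormal basis of $L^2$ and dual–normalized in the Gelfand triple, the second-derivative term collapses to $\dup{u_m^\pprime(t)}{\phi_i}=\alpha_i^\pprime(t)$. Defining
\[
A_{ij}(t) \assign \int_\Omega \bigl[\nabla\phi_j\cdot\nabla\phi_i + c(t)\phi_j\phi_i\bigr]\dx,\qquad b_i(t)\assign \scp{f(t)}{\phi_i},
\]
the Galerkin problem is equivalent to the linear ODE system $\alpha^\pprime(t)+A(t)\alpha(t)=b(t)$ on $[0,T]$ with $\alpha(0)=\alpha^\prime(0)=0$.

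Next I would verify the integrability of the coefficients. The stiffness part $\scp{\nabla\phi_j}{\nabla\phi_i}$ is a constant. For the potential part, the Sobolev embedding $H^1_0\hookrightarrow L^4$ (valid for $n\leq 3$) gives $\phi_i\phi_j\in L^2$, so
\[
\Bigl|\int_\Omega c(t)\phi_j\phi_i\dx\Bigr|\le \norm{c(t)}_{L^2}\norm{\phi_i\phi_j}_{L^2},
\]
which shows $A_{ij}\in L^2([0,T])$. Since $f\in L^2([0,T]\times\Omega)$, also $b_i\in L^2([0,T])$. Rewriting the system as a first-order system $y^\prime = By+g$ for $y=(\alpha,\alpha^\prime)^\top$, the matrix $B$ and the inhomogeneity $g$ both belong to $L^2([0,T])\subset L^1([0,T])$.

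Then the standard Carathéodory existence and uniqueness theorem for linear first-order systems with $L^1$-coefficients (see, e.g., Coddington–Levinson) yields a unique absolutely continuous solution $(\alpha,\alpha^\prime)$ on $[0,T]$. In particular each $\alpha_i\in H^1([0,T])\hookrightarrow C([0,T])$, hence the right-hand side of $\alpha_i^\pprime = b_i-\sum_j A_{ij}\alpha_j$ is the sum of an $L^2$ function and products of $L^2$ and continuous functions, which still lies in $L^2([0,T])$. Consequently $\alpha_i\in H^2([0,T])$, and since $u_m$ is a finite linear combination of fixed elements of $H^1_0$ with coefficients in $H^2([0,T])$, we conclude $u_m\in H^2([0,T],H^1_0)$ with $u_m(0)=u_m^\prime(0)=0$.

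The main obstacle is really just the low temporal regularity of $c$: one has to be careful not to appeal to classical ODE existence, but to the Carathéodory version for $L^1$-coefficients. Once that is in place, everything else is a routine application of the Sobolev embedding used to make sense of the coefficient $A(t)$ and a bookkeeping of regularity to lift $\alpha$ from $H^1$ to $H^2$.
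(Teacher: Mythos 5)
Your proposal is correct and follows exactly the route the paper takes (the paper only sketches it in the sentence preceding the lemma): reduce the Galerkin equation to a linear ODE system for the coefficients $\alpha_j$ with right-hand sides $t\mapsto\scp{f(t)}{\phi_i}$ in $L^2([0,T])$ and solve it uniquely in $H^2([0,T])$. Your write-up merely supplies the details the paper leaves implicit --- the $L^4$-embedding bound making $A_{ij}\in L^2([0,T])$, the Carath\'eodory existence theorem for $L^1$-coefficient linear systems, and the bootstrap from absolutely continuous to $H^2$ --- all of which are exactly the right way to fill in that sketch.
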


We next compute an explicit constant that bounds the norms of all $u_m$.
As $u_m(t)$ belongs to the dense subset $\lin \{\phi_j\, |\, j\in\N \}$ of $H^1_0(\Omega)$, this will finally show that $u_m$ converges to a weak solution of the wave equation.

\begin{lemma}[Energy estimates]\label{lemma:energie}
  For $c\in H^1([0,T], L^2)$ with $c^\prime \in L^\infty([0,T], L^2)$ we assume that there is $c_b \in L^\infty([0,T], L^2)$ with $c_b\geq c_0\in \R$ such that
  \begin{equation}\label{eq:constantDelta}
    \norm{c-c_b}_{L^\infty([0,T], L^2)} < \delta(\Omega) \assign \left(2(1+C_{\mathrm{P}}^2)C^2_{H^1_0\hookrightarrow L^4}\right)^{-1},
  \end{equation}
  where $C_{H^1_0\hookrightarrow L^4}$ is the operator norm of the embedding $H^1_0(\Omega)\hookrightarrow L^4(\Omega)$ and $C_{\mathrm{P}}$ denotes the Poincar\'e constant of $H^1_0(\Omega)$. Then $u_m$ from~\eqref{eq:theo:galerkin} satisfies for all $m\in\N$ that
  \begin{align*}
    \esssup_{t\in[0,T]}\norm{u_m(t)}_{H^1_0} + &\esssup_{t\in[0,T]}\norm{u_m^\prime(t)}_{L^2} +
    \norm{u_m^\pprime}_{L^2([0,T], H^{-1})} \\
    &\leq \left(1+\norm{c}_{L^2([0,T], L^2)}\right) e^{C_1 \left(1+\norm{c^\prime}_{L^\infty([0,T], L^2)}\right)}\norm{f}_{L^2([0,T], L^2)}
  \end{align*}
  with $C_1>0$ depending only on $T$, $\Omega$ and $c_0$.
  If, additionally, $c\in H^2([0,T], L^2)$, then
  \begin{equation*}
    \esssup_{t\in[0,T]}\norm{u_m(t)}_{H^1_0} + \esssup_{t\in[0,T]}\norm{u_m^\prime(t)}_{L^2} +
    \norm{u_m^\pprime}_{L^2([0,T], H^{-1})} \leq e^{C_2 \left(1+\norm{c}_{H^2([0,T], L^2)}\right)}\norm{f}_{L^2([0,T], L^2)},
  \end{equation*}
  where $C_{1,2}>0$ merely differ by a constant depending on $\Omega$ and $T$.
\end{lemma}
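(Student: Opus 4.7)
The plan is to run the standard wave-equation energy method, adapted to accommodate a sign-indefinite $c$. First, I would test the Galerkin equation~\eqref{eq:theo:galerkin} with $\phi=u_m^\prime(t)$, which is admissible since $u_m^\prime(t)\in\lin\{\phi_j\,|\,j=1,\dots,m\}$. Combining $\dup{u_m^\pprime(t)}{u_m^\prime(t)}=\tfrac12\ddt\norm{u_m^\prime(t)}_{L^2}^2$ with the time-derivative identity
\begin{equation*}
  \int_\Omega c(t)\,u_m\,u_m^\prime\dx=\tfrac12\ddt\int_\Omega c(t)\,u_m^2\dx-\tfrac12\int_\Omega c^\prime(t)\,u_m^2\dx,
\end{equation*}
which is legitimate because $c\in H^1([0,T],L^2)$, yields the energy balance
\begin{equation*}
  \ddt E_m(t)=2\scp{f(t)}{u_m^\prime(t)}+\int_\Omega c^\prime(t)\,u_m(t)^2\dx,\quad E_m(t):=\norm{u_m^\prime(t)}_{L^2}^2+\norm{\nabla u_m(t)}_{L^2}^2+\int_\Omega c(t)\,u_m(t)^2\dx.
\end{equation*}

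Coercivity of $E_m$ can fail, which is precisely where the smallness of $\delta(\Omega)$ enters. Splitting $c=c_b+(c-c_b)$ and estimating by H\"older together with the embedding $H^1_0\hookrightarrow L^4$ and Poincar\'e's inequality $\norm{u_m}_{H^1}^2\leq(1+C_{\mathrm{P}}^2)\norm{\nabla u_m}_{L^2}^2$ gives
\begin{equation*}
  \left|\int_\Omega(c-c_b)u_m^2\dx\right|\leq \norm{c-c_b}_{L^\infty([0,T],L^2)}\,C_{H^1_0\hookrightarrow L^4}^2\,(1+C_{\mathrm{P}}^2)\norm{\nabla u_m(t)}_{L^2}^2\leq \tfrac12\norm{\nabla u_m(t)}_{L^2}^2
\end{equation*}
by the very definition of $\delta$, while $\int_\Omega c_b\,u_m^2\dx\geq c_0\norm{u_m(t)}_{L^2}^2$. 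Choosing $K>0$ sufficiently large in terms of $c_0$ and $C_{\mathrm{P}}$, the augmented functional $\tilde E_m(t):=E_m(t)+K\norm{u_m(t)}_{L^2}^2$ becomes equivalent to $\norm{u_m^\prime(t)}_{L^2}^2+\norm{u_m(t)}_{H^1_0}^2$. The derivative $\ddt\tilde E_m$ gains the cross-term $2K\scp{u_m}{u_m^\prime}$, absorbed by Young, and I would control the remaining term by the analogous H\"older bound $|\int_\Omega c^\prime u_m^2\dx|\leq C_\Omega\norm{c^\prime(t)}_{L^2}\norm{\nabla u_m(t)}_{L^2}^2$, resulting in
\begin{equation*}
  \ddt\tilde E_m(t)\leq \norm{f(t)}_{L^2}^2+C\bigl(1+\norm{c^\prime(t)}_{L^2}\bigr)\tilde E_m(t).
\end{equation*}
Since $\tilde E_m(0)=0$, Gronwall's lemma produces the $L^\infty$-bounds on $u_m$ and $u_m^\prime$ with exponential factor $\exp\bigl(C_1(1+\norm{c^\prime}_{L^\infty([0,T],L^2)})\bigr)$.

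For the $u_m^\pprime$-bound I would exploit that $u_m^\pprime(t)\in\lin\{\phi_j\,|\,j=1,\dots,m\}$: given $\phi\in H^1_0$, let $P_m\phi:=\sum_{j=1}^m\scp{\phi}{\phi_j}\phi_j$ be its $L^2$-orthogonal projection; since the basis is orthogonal in $H^1_0$ as well, one has $\norm{P_m\phi}_{H^1_0}\leq\norm{\phi}_{H^1_0}$, and the duality $\dup{\phi_i}{\phi_j}=\delta_{ij}$ implies $\dup{u_m^\pprime(t)}{\phi}=\dup{u_m^\pprime(t)}{P_m\phi}$. The Galerkin equation then gives $\dup{u_m^\pprime(t)}{\phi}=\scp{f(t)}{P_m\phi}-a(t,u_m(t),P_m\phi)$, and a H\"older estimate $|\int_\Omega c\,u_m\,P_m\phi\dx|\leq C\norm{c(t)}_{L^2}\norm{u_m(t)}_{H^1_0}\norm{\phi}_{H^1_0}$ produces
\begin{equation*}
  \norm{u_m^\pprime(t)}_{H^{-1}}\leq C\bigl(\norm{f(t)}_{L^2}+(1+\norm{c(t)}_{L^2})\norm{u_m(t)}_{H^1_0}\bigr).
\end{equation*}
Squaring and integrating over $[0,T]$ introduces the polynomial factor $1+\norm{c}_{L^2([0,T],L^2)}$ and completes the first estimate. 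The second estimate follows from the one-dimensional embedding $H^1([0,T])\hookrightarrow L^\infty([0,T])$, which yields $\norm{c^\prime}_{L^\infty([0,T],L^2)}+\norm{c}_{L^2([0,T],L^2)}\leq C\norm{c}_{H^2([0,T],L^2)}$, together with the elementary inequality $1+x\leq e^x$ to absorb the polynomial prefactor into the exponential.

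The main obstacle is ensuring coercivity of the natural wave energy when $c$ is sign-indefinite; the perturbation estimate via $H^1_0\hookrightarrow L^4$, calibrated by the very definition of $\delta(\Omega)$, is what allows the indefinite contribution $\int_\Omega(c-c_b)u_m^2\dx$ to be absorbed into half of $\norm{\nabla u_m}_{L^2}^2$, after which the $L^2$-augmentation of the energy restores coercivity and the remainder becomes a routine Gronwall-plus-projection argument.
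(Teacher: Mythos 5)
Your proposal is correct, and its skeleton coincides with the paper's proof: test the Galerkin equation with $u_m^\prime$, split $c = c_b + (c-c_b)$ so that the indefinite part is absorbed into half of the gradient term via the embedding $H^1_0\hookrightarrow L^4$ and the definition of $\delta(\Omega)$, apply Gronwall for the $L^\infty$-in-time bounds, and finish with a duality estimate for $u_m^\pprime$. Two local steps differ, and both are worth recording. First, to control the possibly negative term $c_0\norm{u_m(t)}_{L^2}^2$, the paper works with the \emph{integrated} energy identity and uses $u_m(t)=\int_0^t u_m^\prime(s)\ds$ to bound $c_0\norm{u_m(t)}_{L^2}^2\geq \min(0,Tc_0)\int_0^t\norm{u_m^\prime(s)}_{L^2}^2\ds$, feeding this into the integral form of Gronwall; you instead augment the energy by $K\norm{u_m}_{L^2}^2$ and run the differential form of Gronwall, absorbing the cross term $2K\scp{u_m}{u_m^\prime}$ by Young. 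The outcomes are the same, but note that you only need (and only actually justify) the coercivity direction of your claimed ``equivalence'' of $\tilde E_m$ with $\norm{u_m^\prime}_{L^2}^2+\norm{u_m}_{H^1_0}^2$: an upper bound on $\int_\Omega c_b u_m^2\dx$ would involve $\norm{c_b}_{L^\infty([0,T],L^2)}$, which is not controlled by the constants $T$, $\Omega$, $c_0$ the lemma allows; fortunately Gronwall never uses that direction. Second, for the $H^{-1}$-bound on $u_m^\pprime$ you insert the $L^2$-orthogonal projection $P_m$ onto the span and use that the basis is orthogonal in $H^1_0$ as well, giving $\norm{P_m\phi}_{H^1_0}\leq\norm{\phi}_{H^1_0}$; the paper tests directly with an arbitrary $v\in H^1_0$, which strictly speaking is licensed only after exactly this projection step, since the Galerkin identity holds only for test functions in the span. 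Your write-up is therefore the more careful one at that point, and it yields the same factor $1+\norm{c}_{L^2([0,T],L^2)}$ and, after the one-dimensional embedding $H^1([0,T])\hookrightarrow L^\infty([0,T])$ and $1+x\leq e^x$, the same second estimate.
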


The energy estimate depends on the lower bound $c_0 \in \R$ of the comparison parameter $c_b$.
Even if $c_0$ is completely arbitrary, we \textit{fix} this constant from now on to avoid technicalities.

\begin{proof}
  (1) As $u_m^\prime(t) = \sum_{j=1}^m \alpha_j'(t) \phi_j$ belongs to $\lin \{\phi_j\, |\, j\in\N \}\subset H^1_0$, we plug $u_m^\prime$ as test function into the weak formulation of $u_m$,
  \begin{equation}\label{eq:theo:energiebeweis}
    \dup{u_m^\pprime(t)}{u_m^\prime(t)} + a(t, u_m(t), u_m^\prime(t)) = \scp{f(t)}{u_m^\prime(t)}
    \quad \text{for a.e.~$t\in [0,T]$}.
  \end{equation}
  Due to the representation of $u_m^\prime(t)$, we directly get that
  \[
    \dup{u_m^\pprime(t)}{u_m^\prime(t)} = \sum_{i=1}^m \alpha_i^\pprime(t) \alpha_i^\prime(t) = \frac12 \frac{\dd}{\dt} \sum_{i=1}^m \alpha_i^\prime(t)^2 = \frac12 \frac{\dd}{\dt} \norm{u_m^\prime(t)}^2_{L^2},
  \]
  and, analogously, $\scp{\nabla u_m^\prime(t)}{\nabla u_m(t)} = \tfrac{\dd}{\dt} \norm{\nabla u_m}^2_{L^2(\Omega)^n} /2$.
  Setting
  \[
    a^\prime(t, v, w) \assign \frac \dd\dt a(t, v, w) = \int_\Omega c^\prime(t) v w \dx
    \quad \text{for } v, w\in H^1_0,
  \]
  one computes, again by finiteness of the sum representation of $u_m^\prime$, that
  \begin{align}
    \frac \dd \dt a(t, u_m(t), u_m(t)) &= \frac \dd \dt \left(\norm{\nabla u_m(t)}^2_{L^2(\Omega)^n} + \int_{\Omega} c(t) u_m(t)^2 \dx \right) \notag \\
    %    &= \int_\Omega 2\, \nabla u_m^\prime(t) \cdot \nabla u_m(t) \dx + \int_\Omega 2 \, c(t) u_m^\prime(t) u_m(t) + c^\prime(t) u_m(t)^2 \dx  \notag \\
    &= 2 \, a(t, u_m(t), u_m^\prime(t)) + a^\prime(t, u_m(t), u_m(t)). \label{eq:theo:energiebeweis3}
  \end{align}
  Together with~\eqref{eq:theo:energiebeweis}, the latter equality shows that for a.e.~$t\in[0,T]$ there holds
  \[
    2\,\scp{f(t)}{u_m^\prime(t)} + a^\prime(t, u_m(t), u_m(t)) = \frac \dd \dt a(t, u_m(t), u_m(t)) + \frac{\dd}{\dt} \norm{u_m^\prime(t)}^2_{L^2}.
  \]
  Thus, the fundamental theorem of analysis and the zero initial conditions for $u_m$ imply that
  \begin{equation}\label{eq:aux215}
    \int_0^t \big[ 2\,\scp{f(s)}{u_m^\prime(s)} + a^\prime(s, u_m(s), u_m(s)) \big] \ds
    %    &= a(t, u_m(t), u_m(t)) - a(0, u_m(0), u_m(0)) \\ & \ \quad +  \norm{u_m^\prime(t)}^2_{L^2} - \norm{u_m^\prime(0)}^2_{L^2} \\
    = a(t, u_m(t), u_m(t)) + \norm{u_m^\prime(t)}^2_{L^2}.
  \end{equation}
  Bounding the right-hand side from below by Poincar\'e's estimate $\norm{u_m}_{L^2}\leq C_{\mathrm{P}} \norm{\nabla u_m}_{L^2}$ we hence arrive at
  \begin{align*}
    a(t, u_m(t), u_m(t)) \ + \ & \norm{u_m^\prime(t)}^2_{L^2} = \norm{\nabla u_m(t)}^2_{L^2} + \int_\Omega c(t) \abs{u_m(t)}^2 \dx + \norm{u_m^\prime(t)}^2_{L^2} \notag \\
    &\geq (1+C_{\mathrm{P}}^2)^{-1}\, \norm{ u_m(t)}^2_{H^1_0} + \int_\Omega c(t) \abs{u_m(t)}^2 \dx + \norm{u_m^\prime(t)}^2_{L^2}.
    % &\geq \frac{1}{2(1+C_{\mathrm{P}}^2)}\, \norm{ u_m(t)}^2_{H^1_0} + c_0 \norm{u_m(t)}_{L^2}^2 + \norm{u_m^\prime(t)}^2_{L^2}\label{eq:theo:energiebeweis4},
  \end{align*}
  (2) As $c$ may take negative values, it is now crucial to bound the second integral on the right via the auxiliary function $c_b(t)$ that is by assumption bounded from below by $c_0 \in \R$,
  \begin{align*}
    \int_\Omega c(t) u_m(t)^2 \dx &= \int_\Omega \big[ c_b(t) \abs{u_m(t)}^2 + (c(t)-c_b(t)) \abs{u_m(t)}^2 \big] \dx \\
    &\geq c_0 \norm{u_m(t)}^2_{L^2} - \left(2(1+C_{\mathrm{P}}^2)C^2_{H^1_0\hookrightarrow L^4}\right)^{-1} \norm{u_m(t)}^2_{L^4} \\
    &\geq c_0 \norm{u_m(t)}^2_{L^2} - \frac{1}{2(1+C_{\mathrm{P}}^2)} \, \norm{u_m(t)}^2_{H^1_0}.
  \end{align*}
  As the constant in front of $\norm{u_m(t)}^2_{H^1_0}$ is positive, we conclude that
  \[
    a(t, u_m(t), u_m(t)) + \norm{u_m^\prime(t)}^2_{L^2} \geq \frac{1}{2(1+C_{\mathrm{P}}^2)}\, \norm{ u_m(t)}^2_{H^1_0} + c_0 \norm{u_m(t)}_{L^2}^2 + \norm{u_m^\prime(t)}^2_{L^2}.
  \]
  In combination with~\eqref{eq:aux215}, we have hence shown that
  \begin{align*}
    & \frac{1}{2(1+C_{\mathrm{P}}^2)}\norm{ u_m(t)}^2_{H^1_0} + \norm{u_m^\prime(t)}^2_{L^2} + c_0 \norm{u_m(t)}_{L^2}^2 \\
    & \quad \leq \norm{f}^2_{L^2([0,T], L^2)} + \int_0^t \left[\norm{u_m^\prime(s)}^2_{L^2} + \norm{c^\prime}_{L^\infty([0,T], L^2)} C^2_{H^1_0\hookrightarrow L^4}\norm{u_m(s)}^2_{H^1_0} \right] \ds.
  \end{align*}
  (3) As we aim to apply Gronwall's inequality (see, e.g.~\cite[Appendix B.2]{evans:pde}), we need to control the term $c_0 \norm{u_m(t)}_{L^2}^2$. This is simple if $c_0$ is non-negative, as that term can then be dropped.
  More generally,
  \begin{align*}
    c_0 \norm{u_m(t)}_{L^2}^2 &= c_0 \int_\Omega \abs{u_m(t)}^2 \dx = c_0 \int_\Omega {\left(\int_0^t u_m^\prime(s) \ds \right)}^2 \dx \\
    &\geq \min(0, c_0) \int_\Omega t \int_0^t \abs{u_m^\prime(s)}^2 \ds \dx \geq \min(0, T  c_0) \int_0^t \norm{u_m^\prime(s)}_{L^2}^2 \ds,
  \end{align*}
  such that
  \begin{align*}
    \frac{1}{2(1+C_{\mathrm{P}}^2)}\,\left( \norm{u_m(t)}^2_{H^1_0} + \norm{u_m^\prime(t)}^2_{L^2}\right)
    %    \leq \frac{1}{2(1+C_{\mathrm{P}}^2)}\, \norm{u_m(t)}^2_{H^1_0} + \norm{u_m^\prime(t)}^2_{L^2} \\
    %    &\leq \norm{f}^2_{L^2([0,T], L^2)} + \int_0^t \max \{1,1 -T  c_0\} \norm{u_m^\prime(s)}^2_{L^2} + \norm{c^\prime}_{L^\infty([0,T], L^2)} C^2_{H^1_0\hookrightarrow L^4}\norm{u_m(s)}^2_{H^1_0} \ds \\
    \leq &\max\left \{1,1 -T  c_0, C^2_{H^1_0\hookrightarrow L^4}\norm{c^\prime}_{L^\infty([0,T], L^2)}\right \} \\
    & \times \left(\norm{f}^2_{L^2([0,T], L^2)} \! \! + \int_0^t \norm{u_m(s)}^2_{H^1_0} + \norm{u_m^\prime(s)}^2_{L^2}  \ds\right).
  \end{align*}
  Now, Gronwall's inequality implies that
  \begin{equation}\label{eq:theo:energiebeweis2}
    \norm{u_m(t)}^2_{H^1_0} + \norm{u_m^\prime(t)}^2_{L^2}
    %    &\leq 2 (1+T e^{T}) (1+C_{\mathrm{P}}^2) \max\left \{1,1 -T  c_0, C^2_{H^1_0\hookrightarrow L^4}\norm{c^\prime}_{L^\infty([0,T], L^2)}\right \}   \norm{f}^2_{L^2([0,T], L^2)} \notag \\
    \leq \norm{f}^2_{L^2([0,T], L^2)} \exp\left(C(T, c_0, \Omega) \left(1+\norm{c^\prime}_{L^\infty([0,T], L^2)}\right)\right)
  \end{equation}
  % for $C(T,c_0,\Omega) = 2 (1+T e^{T}) (1+C_{\mathrm{P}}^2) \max \big \{1,1 -T  c_0, C^2_{H^1_0\hookrightarrow L^4} \big \}$.
  where $C(T,c_0,\Omega)$ is a placeholder for a constant depending only on $T,c_0$ and $\Omega$.\hfill \\
  (4) To obtain $H^{-1}$-bounds for $u_m^\pprime(t)$, let us finally choose any $v\in H^1_0(\Omega)$ with $\norm{v}_{H^1_0} = 1$
 % as $v=v_\| + v_\perp$, where $v_\|\in \lin \{ \phi_j'' \, |\, j=1,\dots, m \} \subset L^2(\Omega)$ and $v_\perp$ belongs to the orthogonal complement of the latter set in $L^2(\Omega)$.
 and note that for a.e.~$t\in[0,T]$,
  \begin{align*}
    \dup{u_m^\pprime(t)}{v} &= \scp{u_m^\pprime(t)}{v}
    % = \scp{u_m^\pprime(t)}{v_\|}
    = \scp{f(t)}{v} - a(t, u_m(t), v) \\
    &\leq \norm{f(t)}_{L^2} \norm{v}_{L^2} + \norm{\nabla u_m(t)}_{L^2} \norm{\nabla v}_{L^2} + \norm{c(t)}_{L^2} \norm{u_m(t)}_{L^4}\norm{v}_{L^4} \\
    &\leq \norm{f(t)}_{L^2}  + \left(1+C^2_{H^1_0\hookrightarrow L^4}\norm{c(t)}_{L^2}\right) \norm{u_m(t)}_{H^1_0}.
  \end{align*}
  Representing the $H^{-1}$-norm as a dual norm shows that
  \begin{align*}
    &\norm{u_m^\pprime}^2_{L^2([0,T], H^{-1})} = \int_0^T \sup_{\| v \|_{H^1_0}=1} |\dup{u_m^\pprime(t)}{v}|^2 \dt \\
    &\ \leq \int_0^T \left(\norm{f(t)}_{L^2}  + \left(1+C^2_{H^1_0\hookrightarrow L^4}\norm{c(t)}_{L^2}\right) \norm{u_m(t)}_{H^1_0} \right)^2 \dt \\
    %    &\ \leq 2 \left(\norm{f}^2_{L^2([0,T], L^2)}  + 2 \left(T+C^4_{H^1_0\hookrightarrow L^4}\norm{c}^2_{L^2([0,T], L^2)}\right) \esssup_{t\in[0,T]}\norm{u_m(t)}_{H^1_0}^2  \right)\\
    %    &\ \leq 2 \left( \norm{f}^2_{L^2([0,T], L^2)}  + C(T, c_0, \Omega) \left(1+\norm{c}_{L^2([0,T], L^2)}\right)^2 \left( 1+\norm{c^\prime}_{L^\infty([0,T], L^2)}\right) \norm{f}_{L^2([0,T], L^2)}^2 \right) \\
    &\ \leq \left(1+\norm{c}_{L^2([0,T], L^2)}\right)^2 \exp\left(C(T, c_0, \Omega) \left(1+\norm{c^\prime}_{L^\infty([0,T], L^2)}\right)\right)\norm{f}^2_{L^2([0,T], L^2)}.
  \end{align*}
  Together with~\eqref{eq:theo:energiebeweis2} this shows the lemma's first claimed bound.
  For $c\in H^2([0,T], L^2)$ the estimate simplifies as $c^\prime \in H^1([0,T], L^2)$, a space which is continuously embedded in $L^\infty([0,T], L^2)$.
  The arising norms of $c$ und $c^\prime$ can hence be bounded by $\norm{c}_{H^2([0,T], L^2)}$. \qedhere
\end{proof}

As is well-known, the estimate of the last lemma cannot be shown transferred to the (weak) limit $u$ of the bounded sequence $u_m$, because the regularity of $u(t)$ is too low to test its variational formulation by $u^\prime(t)$.
The energy estimates, however, do allow to prove existence of at least one solution to~\eqref{eq:theo:schwachelsg}.
Uniqueness of this solution is then achieved by a standard regularity trick.

\begin{theorem}\label{satz:existenz}
  Under the assumptions of Lemma~\ref{lemma:energie} there is a unique weak solution $u$ to~\eqref{eq:arwp} that satisfies the energy estimates of Lemma~\ref{lemma:energie}.
\end{theorem}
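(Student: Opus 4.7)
The plan is to extract a weak limit of the Galerkin approximations $(u_m)$ to obtain existence, and then handle uniqueness via a classical duality test function. The main obstacle is the uniqueness step: since the natural choice of test function, the time derivative $w^\prime$ of a candidate difference, only takes values in $L^2$ rather than $H^1_0$, one cannot test the weak formulation directly and must instead feed in an artificial test function adapted to the wave operator.

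\emph{Existence.} By Lemma~\ref{lemma:energie}, $(u_m)$ is bounded in $L^\infty([0,T], H^1_0)$, $(u_m^\prime)$ in $L^\infty([0,T], L^2)$ and $(u_m^\pprime)$ in $L^2([0,T], H^{-1})$. Banach--Alaoglu yields a (not relabeled) subsequence with weak-$*$, respectively weak, limits $u$, $u^\prime$, $u^\pprime$ in these three spaces; pairing against $\eta\in C_c^\infty(0,T)$ identifies the latter two limits with the distributional derivatives of $u$, so $u$ lies in the solution space from Definition~\ref{def:schwacheloesung} and the energy bounds transfer to $u$ by weak lower semicontinuity. To pass to the limit in~\eqref{eq:theo:schwachelsg}, fix $k\in\N$ and $\eta\in C_c^\infty(0,T)$ and integrate~\eqref{eq:theo:galerkin} with $\phi=\phi_k$ against $\eta$: the $u_m^\pprime$- and $\nabla u_m$-terms converge by weak convergence, and the potential term converges because $v\mapsto\int_0^T\!\int_\Omega c\,v\,\phi_k\,\eta\dx\dt$ is a bounded linear functional on $L^2([0,T], H^1_0)$ (using $c\in L^2(L^2)$ and $H^1_0\hookrightarrow L^4$), hence weakly continuous. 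Density of $\lin\{\phi_j\,|\,j\in\N\}$ in $H^1_0$ upgrades the identity to arbitrary test functions, and the initial conditions $u(0)=0$ in $L^2$ and $u^\prime(0)=0$ in $H^{-1}$ follow because $u_m\rightharpoonup u$ in $H^1([0,T], L^2)$ and $u_m^\prime\rightharpoonup u^\prime$ in $H^1([0,T], H^{-1})$, with weakly continuous traces at $t=0$.

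\emph{Uniqueness.} For the difference $w=u_1-u_2$ of two weak solutions, I would apply the Ladyzhenskaya/Lions trick: for each $s\in(0,T]$ set
\[
v_s(t) \assign \begin{cases} \int_t^s w(\tau)\dd\tau, & 0\le t\le s,\\ 0, & s<t\le T,\end{cases}
\]
so that $v_s(t)\in H^1_0$, $v_s(s)=0$ and $v_s^\prime(t)=-w(t)$ on $(0,s)$. Testing the homogeneous weak equation for $w$ with $v_s$ on $(0,s)$ and integrating by parts in $t$---the boundary contributions vanish thanks to $w(0)=w^\prime(0)=0$ and $v_s(s)=0$---the $\dup{w^\pprime}{v_s}$ term collapses to $\tfrac12\norm{w(s)}_{L^2}^2$ and the $\scp{\nabla w}{\nabla v_s}$ term to $\tfrac12\norm{\nabla v_s(0)}_{L^2}^2$, yielding the identity
\[
\tfrac12\norm{w(s)}_{L^2}^2 + \tfrac12\norm{\nabla v_s(0)}_{L^2}^2 = -\int_0^s\!\int_\Omega c(t)\,w(t)\,v_s(t)\dx\dt.
\]
The regularity $c\in L^\infty([0,T], L^2)$ (implied by the hypotheses of Lemma~\ref{lemma:energie}) combined with $H^1_0\hookrightarrow L^4$ bounds the right-hand side by a time integral of $\norm{w}_{H^1_0}\norm{v_s}_{H^1_0}$; rewriting $v_s(t)=\int_0^s w\dd\tau-\int_0^t w\dd\tau$ then casts the estimate into a form amenable to Gronwall's inequality and forces $\norm{w(s)}_{L^2}=0$, hence $w\equiv 0$. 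The delicate point is this last bookkeeping, but it proceeds along the classical template for the time-independent wave equation in~\cite[Chapter~7.2]{evans:pde} without further surprises.
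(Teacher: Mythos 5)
Your existence argument is correct and is essentially the paper's route: Galerkin bounds from Lemma~\ref{lemma:energie}, weak-$*$/weak limits, passage to the limit in the equation, and lower semicontinuity for the energy estimate (the paper itself outsources the verification that the limit is a weak solution, and uniqueness, to Lions--Magenes). The genuine gap is in your uniqueness step. The identity you derive,
\[
\tfrac12\norm{w(s)}_{L^2}^2+\tfrac12\norm{\nabla v_s(0)}_{L^2}^2=-\int_0^s\!\!\int_\Omega c\,w\,v_s\dx\dt,
\]
is fine, but the estimate you then propose, namely bounding the right-hand side by $C\int_0^s\norm{w(t)}_{H^1_0}\norm{v_s(t)}_{H^1_0}\dt$, cannot be closed by Gronwall. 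The left-hand side controls $w(s)$ only in $L^2(\Omega)$ (and $v_s(0)$ in $H^1_0$), whereas your bound involves $\norm{w(t)}_{H^1_0}$ --- a quantity no version of this inequality controls. Note that a weak solution in the sense of Definition~\ref{def:schwacheloesung} lies merely in $L^2([0,T],H^1_0)$, and $w$, being the difference of two \emph{a priori arbitrary} weak solutions, satisfies no energy estimate you may invoke. Concretely, using $\norm{v_s(t)}_{H^1_0}\le\int_t^s\norm{w(\tau)}_{H^1_0}\dd\tau$, your estimate collapses to $\norm{w(s)}_{L^2}^2\le C\,s\int_0^s\norm{w(t)}_{H^1_0}^2\dt$, which compares different norms of $w$ and is satisfied by plenty of nonzero functions; rewriting $v_s$ as a difference of primitives of $w$ does not repair this, since the problematic factor is $\norm{w(t)}_{H^1_0}$, not $v_s$.

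The missing idea is a \emph{second} integration by parts in time applied to the potential term itself. Since $w=-v_s^\prime$ on $(0,s)$, one has $c\,w\,v_s=-\tfrac12\,c\,\partial_t\big(v_s^2\big)$, hence
\[
-\int_0^s\!\!\int_\Omega c\,w\,v_s\dx\dt
=-\tfrac12\int_\Omega c(0)\,v_s(0)^2\dx-\tfrac12\int_0^s\!\!\int_\Omega c^\prime\,v_s^2\dx\dt,
\]
so that only $v_s$ (which the identity does control) and $c^\prime$ appear --- this is exactly where the hypothesis $c^\prime\in L^\infty([0,T],L^2)$ of Lemma~\ref{lemma:energie} enters the uniqueness proof. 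The boundary term at $t=0$ is then absorbed into $\tfrac12\norm{\nabla v_s(0)}_{L^2}^2$ by splitting $c(0)=c_b(0)+\big(c(0)-c_b(0)\big)$ and using the $\delta(\Omega)$-closeness condition~\eqref{eq:constantDelta} as in the energy estimate, the $c_0$-contribution being controlled via $\norm{v_s(0)}_{L^2}^2\le s\int_0^s\norm{w(t)}_{L^2}^2\dt$. Writing $v_s(t)=y(s)-y(t)$ with $y(t)\assign\int_0^t w\,\dd\tau$, restricting to $s$ in a sufficiently small interval so that the term proportional to $s\norm{\nabla y(s)}_{L^2}^2$ can be absorbed, applying Gronwall to $\norm{w(s)}_{L^2}^2+\norm{\nabla y(s)}_{L^2}^2$, and iterating over finitely many subintervals of $[0,T]$ finally yields $w\equiv0$. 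This is the actual content of the Lions--Magenes/Evans argument your sketch appeals to; without the extra integration by parts the approach fails.
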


\begin{proof}
  Concerning existence we note that the sequence $(u_m)_{m\in\N}$ is bounded in $L^2([0, T], H^1_0)$,
  $(u_m^\prime)_{m\in\N}$ is bounded in $L^2([0, T], L^2)$ and $(u_m^\pprime)_{m\in\N}$ is bounded in $L^2([0, T], H^{-1})$. Due to reflexivity of these spaces we obtain a subsequence, that we denote for notational simplicity also by $(u_m)_{m\in\N}$, such that
  \begin{equation}\label{eq:theo:existenzbeweis4}
    \begin{aligned}
    u_{m}\to u &\quad \text{weakly in $L^2([0,T], H^1_0(\Omega))$}, \\
    u^\prime_{m}\to w &\quad \text{weakly in $L^2([0,T], L^2(\Omega))$}, \text{ and } \\
    u^\pprime_{m}\to z &\quad \text{weakly in $L^2([0,T], H^{-1}(\Omega))$}.
  \end{aligned}
  \end{equation}
  It is easy to show that $w=u^\prime$ as well as $z=u^\pprime$.
  The argument showing that $u$ is in fact a weak solution to~\eqref{eq:arwp} and that it is unique can be found in~\cite{lionsmagenes:bvp1}.
  Finally, we take a look at the energy estimates for $u$. Because of the weak convergences in~\eqref{eq:theo:existenzbeweis4} we only obtain $L^2$-estimates in time at first, but it also follows that $(u_m)_{m\in\N}$ is bounded in  $L^\infty([0, T], H^1_0)$ and $(u_m^\prime)_{m\in\N}$ is bounded in $L^\infty([0, T], L^2)$.
  Up to extraction of a further subsequence (that we do again not denote explicitly), this shows that
  \begin{equation*}%\label{eq:theo:existenzbeweis5}
    \begin{aligned}
      u_{m}\to u &\quad \text{weak-$\ast$\ in $L^\infty([0,T], H^1_0(\Omega))$}, \\
      u^\prime_{m}\to u^\prime &\quad \text{weak-$\ast$\ in $L^\infty([0,T], L^2(\Omega))$}, \text{ and } \\
      u^\pprime_{m}\to u^\pprime &\quad \text{weakly in $L^2([0,T], H^{-1}(\Omega))$}.
    \end{aligned}
  \end{equation*}
  The energy estimates of Lemma~\ref{lemma:energie} hence transfer to $u$ which in particular belongs to the space $H^2([0,T] , H^{-1}) \cap W^{1,\infty}([0,T], L^2) \cap L^\infty([0, T], H^1_0)$. \qedhere
\end{proof}

The last result states that for parameters that are first smooth enough and second close enough to a function bounded from below by some constant, there is a unique solution to the wave equation that satisfies an energy estimate.
Before stating this as a corollary, recall the number $\delta(\Omega)$ from~\eqref{eq:constantDelta} and the arbitrary, but fixed, constant $c_0 \in \R$ from Lemma~\ref{lemma:energie}.

\begin{corollary}\label{korollar:existenzeindeutigenergie}
  The wave equation~\eqref{eq:arwp} possesses for every $f\in L^2([0,T], L^2)$ and every
  \begin{equation}\label{eq:aux455}
  c\in  H^2([0,T], L^2) \ \cap \bigcup_{\substack{c_b \, \in L^\infty([0,T], L^2) \\[1mm] c_b \, \geq \, c_0 \, \ale}} B_{L^\infty([0,T], L^2)}(c_b, \, \delta(\Omega))
  \end{equation}
  a unique weak solution. This weak solution satisfies the energy estimate
  \begin{align*}
    \esssup_{t\in[0,T]}\norm{u(t)}_{H^1_0} + \esssup_{t\in[0,T]} & \norm{u^\prime(t)}_{L^2} +
    \norm{u^\pprime}_{L^2([0,T], H^{-1})}
    \leq e^{C \left(1+\norm{c}_{H^2([0,T], L^2)}\right)}\norm{f}_{L^2([0,T], L^2)}
  \end{align*}
  with $C$ depending only on $T$, $\Omega$ and $c_0$.
\end{corollary}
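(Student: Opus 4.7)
The plan is to reduce this statement to a direct invocation of Theorem \ref{satz:existenz} together with the $H^2$-version of the energy bound in Lemma \ref{lemma:energie}. Fix $f\in L^2([0,T], L^2)$ and any $c$ in the set in \eqref{eq:aux455}. By definition of the union, there is some comparison function $c_b\in L^\infty([0,T], L^2)$ with $c_b\geq c_0$ \ale{} for which
\[ \norm{c - c_b}_{L^\infty([0,T], L^2)} < \delta(\Omega), \]
which is precisely the smallness condition \eqref{eq:constantDelta} required by Lemma \ref{lemma:energie}.

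Next I would check the smoothness assumptions on $c$ entering Lemma \ref{lemma:energie}. Since $c\in H^2([0,T], L^2)$ one trivially has $c\in H^1([0,T], L^2)$, and applying the one-dimensional Sobolev embedding $H^1([0,T], L^2) \hookrightarrow C([0,T], L^2) \subset L^\infty([0,T], L^2)$ to $c^\prime \in H^1([0,T], L^2)$ yields $c^\prime \in L^\infty([0,T], L^2)$. All hypotheses of Lemma \ref{lemma:energie}---and consequently of Theorem \ref{satz:existenz}---therefore hold, producing a unique weak solution $u$ to \eqref{eq:arwp}.

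For the energy estimate I would invoke the second, simpler bound in Lemma \ref{lemma:energie}, which is tailored to $c\in H^2([0,T], L^2)$ and delivers a constant $C_2=C_2(T,\Omega,c_0)$ that depends on $c$ only through $\norm{c}_{H^2([0,T], L^2)}$ and, in particular, not on the choice of $c_b$. The uniform bound on the Galerkin sequence then passes to the limit $u$ by the weak and weak-$\ast$ convergences established in the proof of Theorem \ref{satz:existenz}, together with lower semicontinuity of the norms in $L^\infty([0,T], H^1_0)$, $L^\infty([0,T], L^2)$ and $L^2([0,T], H^{-1})$, giving exactly the claimed estimate.

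Since the heavy analytic work is already contained in Lemma \ref{lemma:energie} and Theorem \ref{satz:existenz}, I anticipate no real obstacle. The only subtle point is observing that the constant in the final estimate can be chosen independently of $c_b$: this is so because $c_b$ enters Lemma \ref{lemma:energie} only via the fixed lower bound $c_0$, so taking the union over admissible $c_b$ in \eqref{eq:aux455} does not enlarge the constant beyond $C_2(T,\Omega,c_0)$.
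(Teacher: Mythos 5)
Your proposal is correct and follows essentially the same route as the paper: the corollary is indeed obtained by verifying that membership in the set~\eqref{eq:aux455} supplies both the smallness condition~\eqref{eq:constantDelta} and (via $c^\prime \in H^1([0,T],L^2) \hookrightarrow L^\infty([0,T],L^2)$) the regularity hypotheses of Lemma~\ref{lemma:energie}, and then invoking Theorem~\ref{satz:existenz} together with the second ($H^2$) energy bound. Your remark that the constant is independent of the choice of $c_b$, since $c_b$ enters only through the fixed lower bound $c_0$, matches the paper's convention of fixing $c_0$ once and for all.
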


\section{Fréchet differentiability with respect to the parameter}
\label{sec:frechet}

The solution theory from the last section allows to define a solution operator $S: \, c\,\mapsto \, u$ mapping the time-dependent parameter $c$ to the wave $u$.
This operator is obviously non-linear since, e.g.~$c=0$ is not mapped to the trivial solution.
For inversion, we will hence exploit that $S$ can be locally linearized by its Fr\'echet derivative.

Differentiability of $S$ follows from Lipschitz continuity that we derive, by and large, via the energy estimate from Corollary~\ref{korollar:existenzeindeutigenergie}.
The precise setting is fixed in the following formal definition of $S$.
For simplicity, we \textit{fix} the source term $f\in L^2([0,T], L^2)$ for a moment and recall a last time the fixed constants $\delta(\Omega)$ from~\eqref{eq:constantDelta} and $c_0 \in \R$ from Lemma~\ref{lemma:energie}.

All constants $C$ we use in the sequel may change value from line to line but depend merely on $\Omega$, $c_0$, $T$, and an additional Lebesgue index $p$ that is fixed in the following definition.

\begin{definition}\label{def:operator}
  We consider $\map S{\mathcal D(S)\subset X}Y$ mapping the parameter $c$ to the solution $u$ of~\eqref{eq:theo:schwachelsg} as an operator with domain $\mathcal D(S)$ embedded in $X \assign H^2([0,T], L^2) \cap L^2([0,T], L^p)$ for a fixed $p=p(n)$ with $p(1)\geq 2$, $p(2)>2$, and $p(3)>3$,
  \[
    \mathcal D(S) \assign X \cap \bigcup_{\substack{c_b \, \in L^\infty([0,T], L^2) \\[1mm] c_b \, \geq \, c_0 \, \ale}} B_{L^\infty([0,T], L^2)}(c_b, \, \delta(\Omega)).
  \]
  Due to Corollary~\ref{korollar:existenzeindeutigenergie}, we further set the solution space $Y$ to
  \begin{equation}\label{eq:Y}
     Y \assign H^2([0,T] , H^{-1}) \cap W^{1,\infty}([0,T], L^2) \cap L^\infty([0, T], H^1_0).
  \end{equation}
  Both $X$ and $Y$ are equipped with their natural norms.
\end{definition}

Note that the last definition requires parameters $c$ to belong to $L^2([0,T], L^p)$ for some index $p$.
The reason behind is that the product $h(t) w(t)$ of an $L^p$- and an $H^1_0$-function on a bounded domain belongs to $L^2$, which becomes fundamental for Lemma~\ref{lemma:lipschitz}. This is due to the continuous embedding $H^1_0\hookrightarrow L^q$ for $1\leq q \leq \infty$ and $n=1$, $1\leq q < \infty$ and $n=2$, as well as $1\leq q < 6$ and $n=3$.
Precisely, for $h \in X$, $w \in Y$ and \ale~$t\in[0,T]$,
\begin{align}
  \| h(t) w(t) \|_{L^2} &\leq \begin{cases}
    \norm{h(t)}_{L^2} \norm{w(t)}_{L^\infty} &\leq C \norm{h(t)}_{L^2} \norm{w(t)}_{H^1_0}  \ \ \text{if $n=1$,} \\
    \norm{h(t)}_{L^p} \norm{w(t)}_{L^{\frac{2p}{p-2}}} &\leq C \norm{h(t)}_{L^p} \norm{w(t)}_{H^1_0} \ \ \text{if $n\in \{2,3\}$,}
 \end{cases} \label{eq:aux623}
\end{align}
which follows from the Hölder inequality, that is, $2p/(p-2) <\infty$ for $p>2$ and $2p/(p-2) <6$ for $p>3$.
Squaring and integrating~\eqref{eq:aux623} finally results in
\[
  \norm{hw}_{L^2([0,T], L^2)} \leq \norm{h}_{L^2([0,T], L^p)} \norm{w}_{L^\infty([0,T], H^1_0)} \leq \norm{h}_X \norm{w}_Y.
\]
Note further that the norm on $X$ is stronger than the $L^\infty([0,T], L^2)$-norm, such that $\mathcal D(S)$ is always open in $X$, which is crucial for Fr\'echet differentiability.

When treating inverse problems, we of course work with noisy data in $L^2([0,T], L^2)$; nevertheless, the following results profit from the somewhat more involved image space $Y$ from~\eqref{eq:Y}.

\begin{lemma}\label{lemma:lipschitz}
  The forward map $S$ is locally Lipschitz continuous.
\end{lemma}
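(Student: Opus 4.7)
The plan is to reduce the non-linear Lipschitz estimate for $S$ to a single invocation of the energy estimate from Corollary~\ref{korollar:existenzeindeutigenergie} applied to the wave equation satisfied by the difference of two solutions. Fix $\tilde c \in \mathcal D(S)$ together with a witness $\tilde c_b \in L^\infty([0,T], L^2)$ satisfying $\tilde c_b \geq c_0$ \ale\ and $\norm{\tilde c - \tilde c_b}_{L^\infty([0,T], L^2)} < \delta(\Omega)$. Since $X \hookrightarrow L^\infty([0,T], L^2)$ via $H^2([0,T], L^2) \hookrightarrow C([0,T], L^2)$, a sufficiently small open $X$-ball $U$ around $\tilde c$ remains inside $\mathcal D(S)$ with the same comparison function $\tilde c_b$ (by the triangle inequality in $L^\infty([0,T], L^2)$), and $\sup_{c \in U} \norm{c}_{H^2([0,T], L^2)}$ is finite.

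For arbitrary $c_1, c_2 \in U$, I would set $u_i \assign S(c_i)$ and subtract the two weak formulations from Definition~\ref{def:schwacheloesung}, noting that $f$ cancels, to see that $w \assign u_1 - u_2$ is a weak solution of
\[
  w^\pprime - \laplace w + c_1 w = (c_2 - c_1) u_2, \qquad w(0) = w^\prime(0) = 0, \qquad w|_{\partial \Omega} = 0.
\]
The product estimate~\eqref{eq:aux623} (integrated in time) gives $\norm{(c_2 - c_1) u_2}_{L^2([0,T], L^2)} \leq \norm{c_2 - c_1}_X \norm{u_2}_Y$, so that the right-hand side is a legitimate $L^2([0,T], L^2)$ source. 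Applying Corollary~\ref{korollar:existenzeindeutigenergie} to $w$ with parameter $c_1 \in U \subset \mathcal D(S)$ then yields
\[
  \norm{S(c_1) - S(c_2)}_Y = \norm{w}_Y \leq e^{C (1 + \norm{c_1}_{H^2([0,T], L^2)})}\, \norm{c_2 - c_1}_X\, \norm{u_2}_Y.
\]
A second application of the same corollary to $u_2$ controls $\norm{u_2}_Y$ by $e^{C (1 + \norm{c_2}_{H^2([0,T], L^2)})} \norm{f}_{L^2([0,T], L^2)}$, and the uniform $H^2$-bound over $U$ combines these two estimates into a single constant $L_U$ with $\norm{S(c_1) - S(c_2)}_Y \leq L_U \norm{c_1 - c_2}_X$ for all $c_1, c_2 \in U$.

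The only genuinely delicate point is justifying that the source $(c_2 - c_1) u_2$ of the difference equation indeed belongs to $L^2([0,T], L^2)$, since $c_2 - c_1$ is only in $X$ and $u_2$ only in $Y$. This is precisely the structural reason why the $L^2([0,T], L^p)$ factor was built into $X$ and why $p$ was chosen as in Definition~\ref{def:operator}: the resulting Hölder/embedding inequality~\eqref{eq:aux623} delivers exactly the product estimate needed here. Once this point is settled, verifying the regularity hypotheses of Corollary~\ref{korollar:existenzeindeutigenergie} for $w$ and tracking the constants through the triangle inequality in $X$ is routine, so I do not expect any further serious obstacle.
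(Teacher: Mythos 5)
Your proof is correct and takes essentially the same route as the paper's: the difference $w = Sc_1 - Sc_2$ solves the wave equation with parameter $c_1$ and source $(c_2-c_1)u_2$, the source is bounded in $L^2([0,T],L^2)$ via the product estimate~\eqref{eq:aux623}, and the energy estimate of Corollary~\ref{korollar:existenzeindeutigenergie} is applied twice (to $w$ and to $u_2$). The only cosmetic differences are that you make the neighborhood $U\subset\mathcal D(S)$ explicit where the paper simply bounds all constants on bounded subsets of $X$, and that you estimate with $\norm{u_2}_Y$ where the paper uses $\norm{u_2}_{L^\infty([0,T],H^1_0)}$; neither changes the argument.
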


\begin{proof}
  To $c_1$ and $c_2 \in \mathcal D(S)$ we assign $u_1\assign S c_1$ and $u_2\assign S c_2$. The difference $w\assign u_1- u_2$ then satisfies
  \begin{equation*}
    \dup{w^\pprime(t)}{\phi} + \int_{\Omega} \left[ \nabla w(t) \cdot \nabla \phi + c_1(t) w(t) \phi \right] \dx = \scp{(c_1(t)- c_2(t))u_2(t)}{\phi}
  \end{equation*}
  for \ale~$t\in [0,T]$ and all $\phi\in H^1_0$, subject to homogeneous initial values.
  Corollary~\ref{korollar:existenzeindeutigenergie} shows that
  \begin{align*}
    \big \|S c_1 &- S c_2 \big \|_Y= \norm{w}_Y  \leq \exp\left(C \left(1+\norm{c_1}_{H^2([0,T], L^2)}\right)\right) \norm{(c_1- c_2)  u_2}_{L^2([0,T], L^2)} \\
    %&\leq C\left( 1+ \norm{c}_X \right)^2\, \norm{(c-c_b)\tilde u}_{L^2([0,T], L^2)} \\
    &\leq C\exp\left(C \left(1+\norm{c_1}_{H^2([0,T], L^2)}\right)\right)\norm{c_1- c_2}_{L^2([0,T], L^p)} \norm{u_2}_{L^\infty([0,T], H^1_0)} \\
    &\leq C\exp\left(C \left(1+\norm{c_1}_{H^2([0,T], L^2)}+\norm{c_2}_{H^2([0,T], L^2)}\right)\right) \norm{f}_{L^2([0,T], L^2)} \norm{c_1- c_2}_X,
  \end{align*}
  where we exploited that $(c_1- c_2) u_2 \in L^2([0,T], L^2)$ can be estimated in norm as in~\eqref{eq:aux623} via the generalized H\"older's inequality, our choice of $p$, and the Poincar\'e estimate by the $X$-norm of $c_1 - c_2$ times  the norm of $u_2$ in $L^\infty([0,T], H^1_0)$.
  All arising constants can be uniformly bounded on every bounded set in $X$, which shows the claim.
\end{proof}

Formally computing the derivative of the weak formulation~\eqref{eq:theo:schwachelsg} of $u \in X$ with respect to $c \in \mathcal D(S)$ in direction $h\in X$ shows that $u_h \assign (S^\prime c)[h]$ needs to solve the variational formulation
\begin{equation*}
  \dup{u_h^\pprime(t)}{\phi} + \int_{\Omega} \left[ \nabla u_h(t) \cdot \nabla \phi + \left(h(t) u(t) + c(t) u_h(t)\right) \phi \right] \dx = 0
  \qquad \text{for all } \phi \in H^1_0
\end{equation*}
with zero initial conditions, i.e.~$u_h \in X$ is the weak solution to
\begin{equation}\label{eq:frechet:linpde}
u_h^\pprime - \laplace u_h + c u_h = - h u \quad \text{in $[0,T] \times\Omega$}
\end{equation}
with zero initial and boundary conditions.
The next theorem makes this formal argument rigorous.

\begin{theorem}
  The forward operator $S$ is Fr\'echet differentiable at $c\in \mathcal D(S)$:
  The derivative equals $S^\prime c \in \mathcal L(X,Y)$ and satisfies
  \[ S(c+h) - Sc = (S^\prime c)[h] + \mathcal O \left(\|h\|_X^2\right) \quad \text{as } h \to 0 \text{ in } X. \]
  For every $h\in X$ the function $u_h = (S^\prime c)[h] \in Y$ is the unique weak solution of the weak formulation
  \begin{equation}\label{eq:frechet:weakderiv}
    \dup{u_h^\pprime(t)}{\phi} + \scp{\nabla u_h(t)}{\nabla \phi} + \scp{c(t) u_h(t)}\phi = \scp{-h(t) u(t)}{\phi}
  \end{equation}
  for all $\phi\in H^1_0$, \ale~$t\in [0,T]$, and subject to homogeneous initial conditions $u_h(0)=u_h^\prime(0)=0$.
\end{theorem}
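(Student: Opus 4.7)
The plan has three stages: first establish well-posedness of the linear problem \eqref{eq:frechet:weakderiv}, then identify the candidate derivative $S'c$, and finally verify the quadratic remainder estimate.

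For the first stage, I would read \eqref{eq:frechet:weakderiv} as the inhomogeneous wave equation \eqref{eq:arwp} with the same parameter $c \in \mathcal D(S)$ and new source $-h(t) u(t)$, where $u = Sc \in Y$. The key observation is that estimate \eqref{eq:aux623} together with the bound $\|hw\|_{L^2([0,T],L^2)} \le \|h\|_X \|w\|_Y$ shows $-hu \in L^2([0,T], L^2)$ with $\|hu\|_{L^2([0,T],L^2)} \le \|h\|_X \|u\|_Y$. Therefore Corollary~\ref{korollar:existenzeindeutigenergie} immediately yields a unique weak solution $u_h \in Y$ to \eqref{eq:frechet:weakderiv} satisfying
\[
\|u_h\|_Y \;\le\; e^{C(1+\|c\|_{H^2([0,T],L^2)})}\,\|h\|_X\,\|u\|_Y.
\]
Linearity of $h \mapsto u_h$ follows from uniqueness, and the displayed estimate gives boundedness, so the proposed map $S'c \colon X \to Y$ belongs to $\mathcal L(X,Y)$.

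For the remainder estimate, recall from the text that $\mathcal D(S)$ is open in $X$, so for $\|h\|_X$ small enough we have $c+h \in \mathcal D(S)$. Set $\tilde u \assign S(c+h)$, $w \assign \tilde u - u$, and $r \assign w - u_h = S(c+h) - Sc - (S'c)[h]$. Subtracting the weak formulations of $\tilde u$ and $u$ shows that $w$ is the unique weak solution to the wave equation with parameter $c$, zero initial/boundary data and right-hand side $-h\,\tilde u$; subtracting in turn the weak formulation of $u_h$ (which uses right-hand side $-h\,u$) yields
\[
r'' - \laplace r + c\, r = -h\,(\tilde u - u) = -h\,w \quad \text{in } [0,T]\times\Omega
\]
with zero initial and boundary data. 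Applying Corollary~\ref{korollar:existenzeindeutigenergie} once more gives
\[
\|r\|_Y \;\le\; e^{C(1+\|c\|_{H^2([0,T],L^2)})}\,\|hw\|_{L^2([0,T],L^2)} \;\le\; C(c)\,\|h\|_X\,\|w\|_Y.
\]
Lemma~\ref{lemma:lipschitz} applied on a bounded neighbourhood of $c$ provides a Lipschitz constant $L$ with $\|w\|_Y = \|S(c+h) - Sc\|_Y \le L\,\|h\|_X$. Combining the two bounds gives $\|r\|_Y \le C(c)\,L\,\|h\|_X^2$, which is exactly the claimed $\mathcal O(\|h\|_X^2)$ estimate.

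The only non-routine point is the uniformity of all constants as $h$ ranges in a neighborhood of $0$ in $X$: the energy constants depend on $\|c\|_{H^2([0,T],L^2)}$ and on ensuring that the $c_b$-distance condition \eqref{eq:constantDelta} still holds for $c+h$. Both are handled by restricting to a sufficiently small ball $B_X(0,\rho)$ in $X$ --- openness of $\mathcal D(S)$ provides $\rho$ for the latter, and the $X$-norm controls $\|h\|_{H^2([0,T],L^2)}$ for the former --- so the constants in the energy and Lipschitz estimates can be bounded uniformly in $h$, completing the argument.
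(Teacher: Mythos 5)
Your proposal is correct and follows essentially the same route as the paper's proof: well-posedness of $u_h$ via Corollary~\ref{korollar:existenzeindeutigenergie} applied to the source $-hu \in L^2([0,T],L^2)$, identification of the remainder as the solution of the wave equation with parameter $c$ and right-hand side $-h\,(S(c+h)-Sc)$, and the quadratic bound from the energy estimate combined with~\eqref{eq:aux623} and the local Lipschitz continuity of Lemma~\ref{lemma:lipschitz}. The only (cosmetic) difference is that you subtract in two stages (first $w=\tilde u - u$, then $r = w - u_h$), whereas the paper writes the PDE for $S(c+h)-Sc-(S'c)[h]$ directly; your closing remark on uniformity of the constants is a point the paper treats more tersely but identically in substance.
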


\begin{proof}
  As $\mathcal D(S)$ is open in $X$, there is for every $c \in \mathcal D(S)$ an open $X$-ball centered in $c$ such that all $h$ in this ball satisfy that $c+h\in \mathcal D(S)$.
  The solution $u_h$ from~\eqref{eq:frechet:weakderiv} is well-defined because the right-hand side belongs to $L^2([0,T], L^2)$; $u_h$ further satisfies the energy estimate from Corollary~\eqref{korollar:existenzeindeutigenergie} with $f$ replaced by $-h u$.
  The difference $w \assign S(c+h) - S c - (S^\prime c)[h]$ solves
  \begin{align*}
    \dup{w^\pprime(t)}{\phi} + \scp{\nabla w(t)}{\nabla \phi} + \scp{c(t) w(t)}\phi
    %&= \scp{h(t) u(t)}{\phi} - \scp{h(t)u_+(t)}{\phi} \\
    = \scp{h(t) \left(u(t)-u_+(t)\right)}{\phi}
  \end{align*}
  for all $\phi\in H^1_0$ and \ale~$t \in[0,T]$, with homogeneous initial conditions.
  Thus, Corollary~\ref{korollar:existenzeindeutigenergie} and~\eqref{eq:aux623} imply that
  \begin{align}
    \norm{u_+ - u - u_h}_Y &= \norm{w}_Y  \leq \exp\left(C \left( 1+ \norm{c}_X \right)\right)\norm{h (u-u_+)}_{L^2([0,T], L^2)} \label{eq:frechet:linerror}\\
    &\leq C\exp\left(C \left( 1+ \norm{c}_X \right)\right) \norm{h}_X \norm{u-u_+}_{L^\infty([0,T], H^1_0)}.\notag
  \end{align}
  Lipschitz continuity of $S$ now implies that
  \begin{align*}
    \norm{u_+ - u - u_h}_Y &\leq C\exp\left(C \left( 1+ \norm{c}_X + \norm{c+h}_X  \right)\right) \norm{f}_{L^2([0,T], L^2)}\norm{h}^2_X = \mathcal O\left(\norm{h}^2_X\right)
  \end{align*}
  as $h\to 0$ in $X$.
  Clearly, $S^\prime c$ is a linear operator, such that the energy estimate from Corollary~\ref{korollar:existenzeindeutigenergie} shows that $S^\prime c$ belongs to $\mathcal L(X, Y)$, which finishes the proof.
\end{proof}

The last proof shows that the definition of the parameter space $X$ in Definition~\ref{def:operator} as a subset of $L^2([0,T], L^p)$ is crucial to be able to bound the right-hand side $-hu$ in the variational formulation for the derivative $u_h$.

\section{Ill-posedness of the inverse problem}
\label{sec:illposed}

As we intend to reconstruct $c$ from wave measurements for several sources $f$ we still need to extend the forward operator $S$.
To this end, we first discuss suitable measurement operators $\psi$ to construct vector-valued solution and measurement operators.
For simplicity, we restrict ourselves to a particular linear and continuous measurement operator that we use later on for our numerical experiments.
Second, we rigorously define the inverse problem and prove its ill-posedness in several settings.

In applications, measurements are typically taken by sensors at fixed spatial positions $x_i^s$ and many instances of time $t_i^s$, which yields measured values of the wave at points $(t_1^s, x_1^s), \dots, (t_l^s, x_l^s) \in [0,T]\times \Omega$.
Unfortunately, our solution space $Y$ does not imply that point measurements of $u\in Y$ at $(t_i^s, x_i^s)$ depend continuously on $u \in Y$.
However, sensors anyway provide mean values of the wave in time and space over small regions around the introduced measurement points $(t_i^s, x_i^s)$.
Thus, we model measurements as a convolution in time and space of the wave field against an integral kernel $k: \, \R \times \R^n \to \R$ in $L^2(\R^{n+1})$,
\[
  u_k(t,x) \assign \int_0^T \int_\Omega k(t-s, x-y) u(s,y) \dy \ds, \quad t\in \R, \ x \in \R^n,
\]
and evaluate the convolved version $u_k$ of $u$ at the measurement points $(t_i^s, x_i^s)$.
Precisely, for numbers $r_t>0$ and $r_x>0$ we define a particular kernel $k$ via the normalized auxiliary function $g: \, \R \to \R$ that equals $\sqrt3\, (1-s)$ if $s<1$ and zero otherwise as follows,
\[
  k(t,x) = \frac{g(|t|/r_t) g(\norm{x}/r_x)}{r_t\,r_x^n}, \quad t\in \R, \ x \in \R^n.
\]
This kernel clearly belongs to $L^2(\R^{n+1}) \cap C(\R^{n+1})$ such that the convolution $u_k$ depends in the maximum norm continuously on $u \in L^2([0,T], L^2)$.
The associated evaluation operator
\begin{equation}\label{eq:psi}
 \map \psi{L^2([0,T], L^2)}{\R^l},\quad (\psi u)_i \assign u_k(t_i^s, x_i^s) = \int_0^T \int_\Omega k(t_i^s-s, x_i^s-y) u(s,y) \dy \ds.
\end{equation}
hence roughly speaking models sensor measurements close to $u(t_i^s, x_i^s)$ if $r_t$ and $r_x$ are small.
Due to the Cauchy-Schwarz inequality, $\norm{\psi}_{\mathcal L(L^2([0,T],L^2),\R^l)} \leq \sqrt{l} \,\norm{k}_{L^2(\R^{n+1})} = \sqrt{l}$.
Later on, we will require the adjoint $\psi^*$ of $\psi$ and hence note already here that $\psi^*: \, \R^l \to L^2([0,T], L^2)$ can be characterized by
\[
  \psi^* x = \sum_{i=1}^l x_i\, k(t_i^s-\cdot, x_i^s-\cdot), \qquad x \in \R^l.
\]
Finally, we allow for several sources $f_1, \dots, f_d \in L^2([0, T], L^2)$ for some $d\in\N$ as excitation for the wave equation~\eqref{eq:theo:schwachelsg}.
For any parameter $c$ that satisfies the assumptions of Corollary~\ref{korollar:existenzeindeutigenergie}, the source $f_i$ defines a wave $u_i\in Y$, such that for every $c\in X$ we can associate a forward operator $S_i$ to $f_i$ as in Definition~\ref{def:operator}.
The vector-valued solution operator $\mathbf S$ simply collects all $S_i$ in a vector (and we do \emph{not} distinguish column- and row vectors here).

\begin{definition}
  For the spaces $X$ and $\mathcal D(\mathbf S) \assign \mathcal D(S)$ as in Definition~\ref{def:operator} we set the vector-valued solution operator $\mathbf S$ as
  \[
    {\mathbf S}\colon{\mathcal D(\mathbf S) \subset X} \to{L^2([0, T], L^2)^d},
    \quad
     c \mapsto \left( S_1 c, \dots, S_d c \right).
  \]
  Further, the evaluation operator $\Psi: \, L^2([0,T], L^2)^d \to (\R^l)^d$ with $\Psi (u_1,\dots, u_d) = (\psi \, u_1, \dots, \psi \, u_d)$ is continuous, and the non-linear measurement operator mapping parameters to measurements for $d \in\N$ sources $f_1, \dots, f_d$ is
  \begin{equation}\label{eq:aux677} \map{\Phi}{X}{{(\R^l)}^d}, \quad \Phi = \Psi \circ \mathbf S. \end{equation}
\end{definition}

\autoref{abb:messung:abbildungen} illustrates connections between the vector-valued operators $\Phi$, $\Psi$, and $\mathbf S$. Of course, $\mathbf S$ and $\Phi$ are component-wise Fr\'echet differentiable.

\begin{figure}[tb]
  \centering
  \includegraphics{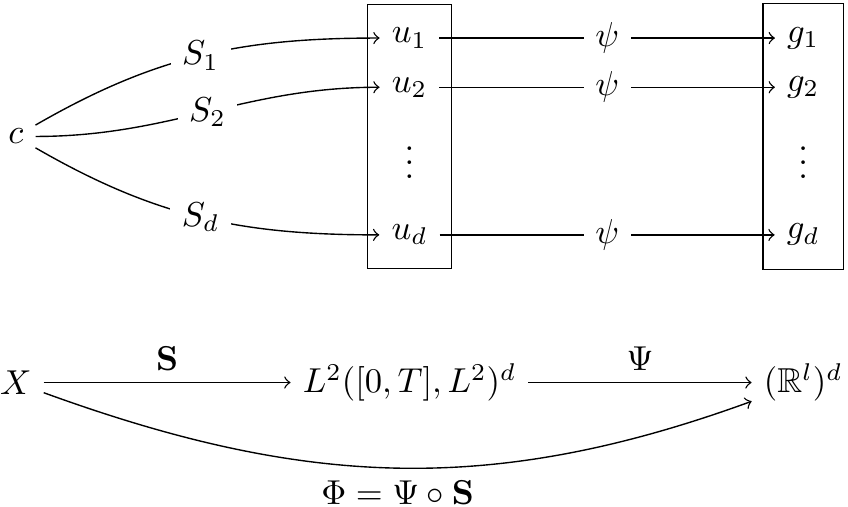}
  \caption{Vector-valued evaluation and measurement operators.}\label{abb:messung:abbildungen}
\end{figure}

The inverse problem we consider from now on is to determine the parameter $c \in X$ either from the solution $\mathbf Sc \in L^2([0,T], L^2)^d$ or from sensor measurements modeled by $\Phi c \in (\R^l)^d$.
We hence aim to determine the parameter of a differential equation from its impact on a wave and of course expect ill-posedness of this task.
This holds particularly if the data $\Phi c$ stems from sensors placed on a surface inside $\Omega$, since the unknown parameter then depends on more variables than the data.
Since $\Phi$ is by construction an operator with finite-dimensional range (and hence in particular features a closed range), the question of ill-posedness in the sense of the subsequent definition is anyway irrelevant for this measurement operator.
(Note the comment in the end of this section on different kinds of measurement models that partly touches this point.)
We prove in the rest of this section that the solution operator $S$ or its derivative $S'$ yield locally ill-posed inversion problems.
This directly implies ill-posedness of operator equations involving $\mathbf S$ or its linearization.

For a general operator $\map F{\mathcal D(F)\subset V}W$ between Banach spaces $V$ and $W$ we recall from~\cite[Definition~3.15]{schuster:regbanach} that the equation $Fx = y$ is locally ill-posed in $x^+ \in \mathcal D(F)$
if there exist for all $r>0$ sequences $(x_n)\subset B_r(x^+) \cap \mathcal{D}(F)$ such that $\| Fx_n  - F x^+\|_W \to 0$ but $\| x_n - x^+ \|_V \not \to 0$ as $n\to\infty$. (See~\cite[Definition 1.1]{HofmannScherzer98} for the corresponding definition in Hilbert spaces.)
As the notion of locality is meaningless for linear problems, a linear operator equation is either \emph{everywhere} locally ill-posed or else \emph{everywhere} locally well-posed.
For reflexive spaces $V$ and $W$ and $D(F)=V$, a linear operator equation is ill-posed if and only if the linear operator $F$ possesses a non-closed range or fails to be injective, see~\cite[Proposition~3.9]{schuster:regbanach}.

This last point gets essential when one linearizes $Sc =u$ via the Fr\'echet derivative $S'$ in $c^+ \in \mathcal D(S)$ and tackles $(S'c^+)[h] = g - Sc^+$ as an equation for $h\in X$. For the following results, recall that $Y=H^2([0,T], H^{-1})\cap W^{1,\infty}([0,T], L^2) \cap L^\infty([0,T], H^1_0)$ is the natural image space for $S$ that results from the energy estimates.

\begin{lemma}
  If $f\neq 0$, then $\map{S^\prime c}X{L^2([0,T], L^2)}$ is for all $c\in \mathcal D(S)$ a compact operator with infinite-dimensional range. In particular, $\mathrm{Rg}(S^\prime c)$ is not closed in $L^2([0,T], L^2)$ and the linearized operator equation $(S'c^+)[h] = g - Sc^+$ is locally ill-posed in every $h\in X$.
\end{lemma}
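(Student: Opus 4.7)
The plan splits into three pieces: compactness of $S'c$ into $L^2([0,T], L^2)$, infinite-dimensional range, and the passage to local ill-posedness.

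First I would show compactness of $S'c\colon X \to L^2([0,T], L^2)$. The energy estimate from Corollary~\ref{korollar:existenzeindeutigenergie}, applied to the linearized equation~\eqref{eq:frechet:linpde} with right-hand side $-hu$, already shows that $S'c$ is bounded from $X$ into $Y$. It therefore suffices to verify that the canonical embedding $Y \hookrightarrow L^2([0,T], L^2)$ is compact. This is a direct application of the Aubin-Lions lemma to the triple $H^1_0 \hookrightarrow\hookrightarrow L^2 \hookrightarrow H^{-1}$ (the first embedding compact by Rellich-Kondrachov, the second continuous): any bounded subset of $Y$ is bounded in $L^2([0,T], H^1_0)$ with time derivatives bounded in $L^2([0,T], L^2) \hookrightarrow L^2([0,T], H^{-1})$, and hence relatively compact in $L^2([0,T], L^2)$. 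Composition of a bounded and a compact operator then gives compactness of $S'c$.

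For the infinite-dimensional range I set $u \assign Sc$. The uniqueness statement in Theorem~\ref{satz:existenz} together with $f \neq 0$ forces $u \neq 0$, so the set $\{(t,x) \in (0,T)\times\Omega : u(t,x)\neq 0\}$ has positive Lebesgue measure and contains countably many pairwise disjoint open space-time cylinders $Q_j$ on each of which $u$ does not vanish identically. Pick $h_j \in C_c^\infty(Q_j)\subset X$ with $h_j u \not\equiv 0$ and set $u_{h_j} \assign (S'c)[h_j]$. If $\sum_{j=1}^N a_j u_{h_j} = 0$, then by linearity $\sum a_j u_{h_j}$ is the unique weak solution of~\eqref{eq:frechet:linpde} with right-hand side $-\bigl(\sum_j a_j h_j\bigr)u$; plugging the zero function into the weak formulation forces $\sum a_j h_j u = 0$ a.e.\ in $(0,T)\times\Omega$. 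The disjointness of the supports $Q_j$ then gives $a_j h_j u = 0$ on each $Q_j$, which together with $h_j u \not\equiv 0$ forces $a_j = 0$ for every $j$. Hence $\{u_{h_j}\}_{j\in\N}$ is linearly independent in $L^2([0,T], L^2)$.

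Having compactness and an infinite-dimensional range, the range of $S'c$ cannot be closed: a compact operator with closed range has finite-dimensional image, since by the open mapping theorem its image would inherit a unit ball that is both the image of a bounded set (hence relatively compact) and a unit ball of a Banach space, which is only possible in finite dimensions. Local ill-posedness of the equation $(S'c^+)[h] = g - Sc^+$ at every $h \in X$ is then immediate from the criterion cited in the excerpt, \cite[Proposition~3.9]{schuster:regbanach}, as both $X$ and $L^2([0,T], L^2)$ are reflexive.

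The main conceptual obstacle I expect is the linear independence argument: compactness via Aubin-Lions is textbook and the step from non-closed range to ill-posedness is purely formal, but the deduction $\sum a_j u_{h_j}=0 \Rightarrow a_j = 0$ hinges on a clean choice of disjointly supported bump functions in the region where $u \neq 0$ combined with a careful use of uniqueness for the linearized equation.
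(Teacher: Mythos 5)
Your proof is correct and follows essentially the same route as the paper: boundedness of $S^\prime c$ from $X$ into $Y$ plus the compact Aubin--Lions/Simon embedding $L^2([0,T],H^1_0)\cap H^1([0,T],L^2)\hookrightarrow L^2([0,T],L^2)$ gives compactness, uniqueness for the linearized wave equation gives infinite-dimensionality of the range, and the standard facts (a compact operator with infinite-dimensional range has non-closed range; non-closed range plus reflexivity gives local ill-posedness everywhere) finish the argument. The only difference is cosmetic: the paper proves infinite-dimensionality by contradiction---a finite-dimensional range would, by injectivity of the right-hand-side-to-solution map, force the functions $-h\,Sc$ into a finite-dimensional space and hence $Sc=0$, $f=0$---whereas you make this explicit by constructing a countable linearly independent family from disjointly supported bumps, filling in the step the paper leaves implicit.
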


\begin{proof}
  We already know that $S^\prime c$ is bounded and linear from $X$ into $Y$.
  The embedding $Y\hookrightarrow L^2([0,T], H^1_0) \cap H^1([0,T], L^2)$ is continuous and from Simon~\cite[p.~85]{simon:compact} we know that the compact embedding of $H^1_0$ in $L^2$ implies that
  \[ L^2([0,T], H^1_0) \cap H^1([0,T], L^2) \hookrightarrow L^2([0,T], L^2) \]
  is compact as well.
  Thus, $\map{S^\prime c}X{L^2([0,T], L^2)}$ is compact and linear.
  If this operator possesses a finite-dimensional range, then the set of right-hand sides $-h \cdot Sc$ in the variational formulation~\eqref{eq:frechet:weakderiv} of $(S^\prime c)[h]$ must also belong to a finite-dimensional space by unique solvability of the wave propagation problem~\eqref{eq:theo:schwachelsg}.
  This forces $Sc$ and hence also $f$ to vanish, what we excluded in the lemma, and hence proves by contradiction that $\mathrm{Rg}(S^\prime c)$ cannot have finite dimension.
  As an infinite-dimensional range of a compact linear operator cannot be closed, we have shown the lemma's claim.
\end{proof}

The next lemma prepares a subsequent example on the ill-posedness of the linearized operator equation at $c=0$.

\begin{lemma}\label{illposed:lemmainjektiv}
  If $c\in \mathcal D(S)$ satisfies that $Sc\neq 0$ \ale~in $[0,T]\times \Omega$, then $S^\prime c$ is injective.
\end{lemma}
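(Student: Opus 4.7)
The plan is to treat the linearized PDE~\eqref{eq:frechet:linpde} as an equation that, given $(S'c)[h]$, recovers $h$ up to division by $u = Sc$. So I would take $h \in X$ with $u_h \assign (S'c)[h] = 0$ in $Y$; the aim is to deduce $h = 0$ in $X$.

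Substituting $u_h \equiv 0$ into the weak formulation~\eqref{eq:frechet:weakderiv} collapses the left-hand side entirely, leaving
\[
  \scp{h(t) u(t)}{\phi} = 0 \quad \text{for all } \phi \in H^1_0 \text{ and a.e.\ } t\in[0,T].
\]
The product $hu$ lies in $L^2([0,T], L^2)$ by~\eqref{eq:aux623} together with $u\in Y \hookrightarrow L^\infty([0,T], H^1_0)$, so density of $H^1_0$ in $L^2$ upgrades the above to $h(t)u(t) = 0$ in $L^2(\Omega)$ for a.e.~$t$, and hence $hu = 0$ almost everywhere in $[0,T]\times \Omega$ by Fubini.

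The hypothesis that $Sc\neq 0$ almost everywhere in $[0,T]\times\Omega$ says that the zero set of $u$ has Lebesgue measure zero, so the pointwise identity $h(t,x)u(t,x) = 0$ forces $h(t,x) = 0$ off a null set of $[0,T]\times\Omega$. Because $X$ embeds continuously and injectively into $L^2([0,T]\times\Omega)$, this yields $h = 0$ in $X$, i.e.\ injectivity of $S'c$.

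I do not foresee a significant obstacle: the argument is essentially a substitution into the variational formulation~\eqref{eq:frechet:weakderiv} combined with a density argument. The only point worth double-checking is that $hu$ is square-integrable in space and time, so that the $L^2$-pairing $\scp{\cdot}{\cdot}$ is legitimate and the density argument applies; this is already delivered by~\eqref{eq:aux623} under the choice of $p$ made in Definition~\ref{def:operator}, so no further technical work is required.
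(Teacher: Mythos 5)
Your proposal is correct and follows essentially the same route as the paper's own proof: substitute $u_h \equiv 0$ into the weak formulation~\eqref{eq:frechet:weakderiv}, conclude that the right-hand side $-h\,Sc$ vanishes, and then use $Sc \neq 0$ a.e.\ to force $h = 0$. You merely spell out the density and measure-theoretic details that the paper leaves implicit.
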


\begin{proof}
If $(S^\prime c)[h]$ vanishes for some non-zero $h\in X$, then the right-hand side $-h \cdot Sc$ of the formulation~\eqref{eq:frechet:weakderiv} for $(S^\prime c)[h]$ vanishes, which contradicts the assumption that $Sc\neq 0$ \ale.
\end{proof}

\begin{example}\label{bsp:illposed:1d}
  Set $n=1$, $\Omega = (0, \pi)$, $f(t,x) = 2 \sin(x)\sin(t)$, $c=0$, and fix $T>0$ arbitrarily.
  Then the range $\mathrm{Rg}(S^\prime c)$ is not closed in $Y$, i.e.~the linearized equation at $c=0$ is everywhere locally ill-posed between $X$ and $Y$ (and a fortiori also between $X$ and every Banach space containing $Y$).
\end{example}

\begin{proof}
  Separation of variables shows that the solution to $u^\pprime(t,x) - \partial_x^2 u(t,x) = 2 \sin(x) \sin(t)$ is given by $u(t,x) = \sin(x) \left ( \sin(t) - t\cos(t) \right)$.
  Obviously $u\neq 0$ almost everywhere, which means that $S^\prime c$ is injective.
  We define $(h_k)_{k\in \N} \subset X = H^2([0,T], L^2) \cap L^2([0,T], L^p)$ as $h_k(t,x)=k^{1/2}$ if $x<1/k$ and $0$ else.
  This sequence converges point-wise to zero, but not in the $X$-norm as
  $
    \norm{h_k}_X = \norm{h_k}_{L^2([0,T],L^2)} = T^{1/2}.
  $
  %
  % i.e.~it does not converge to zero in $X$ (and in $L^2([0,T], L^2)$).
  The energy estimates imply that $u_h^k \assign (S^\prime c)[h_k]$ is bounded by
  \[ \norm{u_h^k}_Y \leq C \, \norm{u \, h_k}_{L^2([0,T], L^2)} \leq C (1+T) \sqrt{\pi T}\ \frac{\sin(1/k)}{\sqrt{1/k}} \to 0 \quad \text{as } k\to\infty.
  \]
  Due to injectivity of $S^\prime c$ we conclude that $S^\prime$ cannot have a bounded generalized inverse that is continuous, which proves the claim.
\end{proof}

The last example exploits the zeros of the solution $u(t,\cdot)$ at the boundary of $(0,\pi)$; analogous examples can be constructed independent of dimension, the right-hand side $f$, or the chosen boundary conditions, as long as $u$ is at least continuous and possesses zeros in $[0,T] \times \overline \Omega$.

In a Hilbert space setting there are many results known that connect the ill-posedness of a non-linear equation with the ill-posedness of its linearization, see, e.g.~Section 2 in~\cite{HofmannScherzer98}.
One example is the so-called tangential cone condition, see~\eqref{eq:illposed:nichtlin2} below or~\cite{Scherzer95}, which furthermore straightforwardly extends to Banach spaces.

\begin{theorem}\label{theorem:riederschlecht}
  Assume that $\map F{\mathcal D(F)\subset V}W$ is Fr\'echet differentiable between Banach spaces $V$ and $W$.
  If for some $x^+ \in \mathcal D(F)$ there are $r>0$ and $0 \leq \omega<1$ such that
  \begin{equation}
    \norm{F(v)-F(w) - F^\prime(w)[v-w]}_W \leq \omega \, \norm{F(v) - F(w)}_W \label{eq:illposed:nichtlin}
  \end{equation}
  holds for all $v,w\in B(x^+, r) \cap \mathcal D(F)$, then
  \begin{equation} \label{eq:illposed:nichtlin2}
    1-\omega \leq \frac{\norm{F^\prime(w)[v-w]}_W}{\norm{F(v) - F(w)}_W} \leq 1+\omega, \quad v\not = w.
  \end{equation}
  In this case, the non-linear problem $F(x) = y$ is locally ill-posed in $x^+$ if and only if the linearized problem at $x^+$ is locally ill-posed everywhere; that is, either $F^\prime(x^+)$ is not injective or $\mathrm{Rg}(F^\prime (x^+))$ is not closed in $W$.
\end{theorem}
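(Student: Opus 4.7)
The plan is to split the argument into two clean steps: first derive the two-sided bound \eqref{eq:illposed:nichtlin2} as a direct consequence of the tangential cone condition \eqref{eq:illposed:nichtlin}, and then transport local ill-posedness back and forth between $F$ and its linearization $F'(x^+)$ using \eqref{eq:illposed:nichtlin2} to reduce the equivalence to the known linear characterization in \cite[Proposition~3.9]{schuster:regbanach}.

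The two-sided bound \eqref{eq:illposed:nichtlin2} requires only the triangle inequality in $W$: for the upper estimate I would write
\[
  \norm{F'(w)[v-w]}_W \leq \norm{F(v)-F(w)}_W + \norm{F(v)-F(w)-F'(w)[v-w]}_W
\]
and plug in \eqref{eq:illposed:nichtlin}; the lower bound is the symmetric triangle manipulation.

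With \eqref{eq:illposed:nichtlin2} in hand, the substantive content is the equivalence between local ill-posedness of $F$ at $x^+$ and local ill-posedness of the linear equation $F'(x^+)[h]=g$ at $h=0$---the latter being, by linearity, the same as local ill-posedness everywhere. For the forward implication I would start from a sequence $(x_n) \subset B_r(x^+) \cap \mathcal{D}(F)$ with $\norm{F(x_n)-F(x^+)}_W \to 0$ and $\norm{x_n-x^+}_V \not\to 0$, set $h_n \assign x_n - x^+$, and invoke the upper half of \eqref{eq:illposed:nichtlin2} to obtain $\norm{F'(x^+)[h_n]}_W \to 0$ while $\norm{h_n}_V \not\to 0$. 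Conversely, starting from a sequence $(h_n) \subset V$ with $F'(x^+)[h_n] \to 0$ and $\norm{h_n}_V \not\to 0$, I would rescale to $\tilde h_n \assign \lambda_n h_n$ with $\lambda_n \assign \min\{1, r/(2\norm{h_n}_V)\}$; this keeps $x^+ + \tilde h_n$ inside $B_r(x^+) \cap \mathcal{D}(F)$, and by linearity of $F'(x^+)$ together with $\lambda_n \leq 1$ preserves $\norm{F'(x^+)[\tilde h_n]}_W \to 0$, while on a subsequence with $\norm{h_n}_V \geq c > 0$ the bound $\norm{\tilde h_n}_V \geq \min(c, r/2)$ guarantees $\norm{\tilde h_n}_V \not\to 0$. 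The lower half of \eqref{eq:illposed:nichtlin2} then yields $\norm{F(x^+ + \tilde h_n) - F(x^+)}_W \to 0$.

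Once this equivalence is in place, \cite[Proposition~3.9]{schuster:regbanach} identifies local ill-posedness of the linear equation with the failure of $F'(x^+)$ to be injective or to possess closed range in $W$, completing the proof. The main hurdle I anticipate is exactly this rescaling step in the backward direction, because the perturbations $\tilde h_n$ must simultaneously stay inside the ball where the tangential cone condition is available, send $F'(x^+)[\tilde h_n]$ to zero, and remain bounded away from zero in $V$-norm; the truncated factor $\lambda_n$ together with a passage to a subsequence resolves all three demands at once.
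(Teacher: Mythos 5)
Your proposal is correct and takes essentially the same route as the paper: the two-sided bound \eqref{eq:illposed:nichtlin2} via the (reverse) triangle inequality, followed by transporting ill-posedness sequences between the nonlinear equation and its linearization through that bound, with \cite[Proposition~3.9]{schuster:regbanach} supplying the linear characterization. The paper's own proof is in fact far terser---it merely asserts that comparable residuals force joint local ill-posedness---so your explicit rescaling argument fills in the omitted details; just note that the claim $x^{+}+\tilde h_n\in\mathcal D(F)$ additionally uses that $\mathcal D(F)$ contains a ball around $x^{+}$ (implicit in the Fr\'echet differentiability assumption, and true for $\mathcal D(S)$ in this paper since it is open in $X$), and that the truncation should target arbitrarily small radii $\rho\leq r$ rather than only $r/2$, since local ill-posedness demands sequences in every ball around $x^{+}$.
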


\begin{proof}
  It is well-known that condition~\eqref{eq:illposed:nichtlin} transforms into~\eqref{eq:illposed:nichtlin2} by the reverse triangle inequality.
  If the latter condition holds, the linearized residual in the enumerator behaves as the non-linear residual in the denominator, such that the non-linear and the linearized operator equation can only be jointly locally ill-posed.
\end{proof}

Recall from Definition~\ref{def:operator} that we have fixed a dimension-dependent Lebesgue index $p \in [2,\infty)$ when setting up the parameter space $X$.
The next lemma exploits the conjugate index $q\in (2,\infty]$ defined by $1/p+1/q = 1/2$.

\begin{theorem}\label{lemma:illposed:anwendung}
  Define $q\in (2,\infty]$ as above.
  % $\map S{\mathcal D(S)\subset X}Y$
  % $\map S{\mathcal D(S) \cap L^4([0,T], L^p) \subset X}{L^4([0,T], H^1_0)}$
  Then both settings $\map S{\mathcal D(S)\subset X}{L^\infty([0,T], H^1_0)}$ and $\map S{\mathcal D(S) \cap L^4([0,T], L^p)}{L^4([0,T], L^q)}$
  for the solution operator $S$ allow to prove the non-linearity condition~\eqref{eq:illposed:nichtlin}.
  As both $L^4([0,T], L^p)$ and $L^4([0,T], L^q)$ are reflexive, the conclusion of~\autoref{theorem:riederschlecht} holds for the second case, i.e.~local ill-posedness of the non-linear equation $Sc=u$ at some $c^+ \in \mathcal D(S)\cap L^4([0,T], L^p)$ is equivalent to the local ill-posedness of the corresponding linearized equation $(S'c^+)[h]=u-Sc^+$ between $L^4([0,T], L^p)$ and $L^4([0,T], L^q)$.
\end{theorem}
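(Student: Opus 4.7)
The plan is to extract the tangential cone condition~\eqref{eq:illposed:nichtlin} directly from the residual estimate~\eqref{eq:frechet:linerror} obtained in the proof of Fr\'echet differentiability, and then to apply~\autoref{theorem:riederschlecht}. With $c_1 \assign c+h$ and $c_2 \assign c$, \eqref{eq:frechet:linerror} reads
\[
  \norm{Sc_1 - Sc_2 - (S^\prime c_2)[c_1-c_2]}_Y \leq \exp\bigl(C(1+\norm{c_2}_X)\bigr) \, \norm{(c_1-c_2)(Sc_1 - Sc_2)}_{L^2([0,T], L^2)}.
\]

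For the first setting, I would bound the right-hand product via the post-\eqref{eq:aux623} estimate $\norm{hw}_{L^2(L^2)} \leq \norm{h}_{L^2([0,T], L^p)} \norm{w}_{L^\infty([0,T], H^1_0)}$ and use the trivial embedding $Y \hookrightarrow L^\infty([0,T], H^1_0)$ on the left to arrive at
\[
  \norm{Sc_1 - Sc_2 - (S^\prime c_2)[c_1-c_2]}_{L^\infty([0,T], H^1_0)} \leq C\exp\bigl(C(1+\norm{c_2}_X)\bigr) \norm{c_1-c_2}_X \norm{Sc_1-Sc_2}_{L^\infty([0,T], H^1_0)}.
\]
Restricting $c_1, c_2$ to a ball $B_X(c^+, r)$ with $r$ so small that the overall prefactor falls strictly below one yields~\eqref{eq:illposed:nichtlin}.

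For the second setting the idea is to split the $L^2([0,T],L^2)$-norm of the product more economically. H\"older with $1/p+1/q=1/2$ in space followed by Cauchy--Schwarz in time gives
\[
  \norm{(c_1-c_2)(Sc_1 - Sc_2)}_{L^2([0,T], L^2)} \leq \norm{c_1-c_2}_{L^4([0,T], L^p)} \norm{Sc_1-Sc_2}_{L^4([0,T], L^q)},
\]
while the chain $Y \hookrightarrow L^\infty([0,T], H^1_0) \hookrightarrow L^\infty([0,T], L^q) \hookrightarrow L^4([0,T], L^q)$ (the middle step is the Sobolev embedding $H^1_0 \hookrightarrow L^q$, which is valid in the admissible ranges of $p$ and $q$ for $n\in\{1,2,3\}$) transforms the left-hand side of~\eqref{eq:frechet:linerror}. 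Combining the two yields
\[
  \norm{Sc_1 - Sc_2 - (S^\prime c_2)[c_1-c_2]}_{L^4([0,T], L^q)} \leq C\exp\bigl(C(1+\norm{c_2}_X)\bigr) \norm{c_1-c_2}_{L^4([0,T], L^p)} \norm{Sc_1-Sc_2}_{L^4([0,T], L^q)},
\]
and localizing around $c^+$ in $L^4([0,T], L^p)$ then forces the prefactor below one, establishing~\eqref{eq:illposed:nichtlin}. Since both $L^4([0,T], L^p)$ and $L^4([0,T], L^q)$ are reflexive, the conclusion of~\autoref{theorem:riederschlecht} applies and delivers the claimed equivalence of local ill-posedness.

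The main obstacle is the factor $\exp\bigl(C(1+\norm{c_2}_X)\bigr)$ in the second setting: an $L^4([0,T], L^p)$-ball alone does not control $\norm{c_2}_X$. I would handle this by further intersecting the ball with a bounded subset of $X$, which is harmless for the ill-posedness statement because the latter only requires the existence of a single witnessing sequence, and such a sequence may safely be chosen from an $X$-bounded subset of the $L^4([0,T], L^p)$-ball.
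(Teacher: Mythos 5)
Your derivation of the tangential cone condition follows the paper's proof essentially verbatim: both settings insert product estimates into the linearization-error bound \eqref{eq:frechet:linerror}; the first setting uses the $\norm{h}_{L^2([0,T],L^p)}\norm{w}_{L^\infty([0,T],H^1_0)}$ bound exactly as the paper does, and the second uses H\"older with twice the exponent $4$, which is precisely the paper's step (your version, with matching powers on both factors, is in fact the correct form of the paper's displayed inequality, whose exponents contain a typo). The first setting is complete as you wrote it, since an $X$-ball around $c^+$ bounds $\norm{c_2}_X$ and makes $\norm{c_1-c_2}_{L^2([0,T],L^p)}$ small.

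The gap is in your final paragraph. The paper neutralizes the uncontrolled factor $\exp\left(C(1+\norm{c_2}_X)\right)$ differently: it takes the pre-image space to be $X\cap L^4([0,T],L^p)$ with the intersection norm --- this is what ``reduce $X$ to $X\cap L^4([0,T],L^p)$'' means --- so that a ball around $c^+$ in the pre-image norm automatically bounds $\norm{c_2}_X$, and this intersection is still reflexive (it embeds as a closed subspace of a product of reflexive spaces), so \autoref{theorem:riederschlecht} applies to it. You instead keep the pure $L^4([0,T],L^p)$-ball and cut it down to an $X$-bounded subset, asserting that witnessing sequences ``may safely be chosen'' $X$-bounded. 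That assertion fails in one direction of the equivalence: if $Sc=u$ is locally ill-posed at $c^+$ with respect to the $L^4([0,T],L^p)$-norm, the definition only hands you \emph{some} sequence $(c_n)\subset B_{L^4([0,T],L^p)}(c^+,r)\cap\mathcal D(S)$ with $Sc_n\to Sc^+$ and $c_n\not\to c^+$; nothing allows you to re-choose it with uniformly bounded $X$-norms, and without that you cannot apply your tangential cone condition (established only on $X$-bounded sets) to the pairs $(c_n,c^+)$ to transfer ill-posedness to the linearization. Put differently, restricting the domain changes the notion of local ill-posedness itself, so your restricted equivalence delivers only the implication from linearized to non-linear ill-posedness, not the converse. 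The clean repair is the paper's: state and prove the equivalence with respect to the $X\cap L^4([0,T],L^p)$-norm, reading the theorem's phrase ``between $L^4([0,T],L^p)$ and $L^4([0,T],L^q)$'' as shorthand for exactly that choice.
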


The following proof clearly shows that there are many more interesting settings for the pre-image and image space of $S$ than announced in the theorem.

\begin{proof}
  We have already estimated the linearization error in~\eqref{eq:frechet:linerror}:
  For $c_{1,2}\in \mathcal D(S)$,
  \[
    \norm{Sc_1 -Sc_2 - (S^\prime c_2)[c_1-c_2]}_Y \leq \exp\left(C\left(1+\norm{c_2}_X\right)\right) \norm{(c_1-c_2)\, (Sc_1 - Sc_2)}_{L^2([0,T], L^2)}.
  \]
  We estimate the last term on the right as in~\eqref{eq:aux623},
  \begin{align}
    \norm{(c_1-c_2)\, (Sc_1 - Sc_2)}_{L^2([0,T], L^2)}^2
    % & \leq \int_0^T \norm{ (c_1-c_2)(t)}_{L^p}^2 \norm{(Sc_1 - Sc_2)(t)}_{L^q}^2  \, \mathrm{d}t  \nonumber \\
    & \leq C \norm{ c_1-c_2}_{L^2([0,T], L^p)}^2 \norm{Sc_1 - Sc_2}_{L^\infty([0,T], L^q)}^2, \label{eq:aux852}
  \end{align}
  such that
  \begin{equation*}
    \norm{Sc_1 -Sc_2 - (S^\prime c_2)[c_1-c_2]}_Y \leq C \exp\left(C \left(1+\norm{c_2}_X\right)\right) \norm{ c_1-c_2}_{L^2([0,T], L^p)} \norm{Sc_1 - Sc_2}_{L^\infty([0,T], L^q)}  .
  \end{equation*}
  As the $X$-norm is stronger than the $L^\infty([0,T], L^q)$-norm,
  \[
    C\exp\left(C \left(1+\norm{c_2}_X\right)\right) \norm{c_1-c_2}_{L^2([0,T], L^p)} \leq \omega \quad \text{for some $\omega < 1$}
  \]
  holds whenever $c_1-c_2$ is small enough in the $X$-norm.
  Thus, the claimed bound~\eqref{eq:illposed:nichtlin} holds if we choose $X$ as pre-image space and $L^\infty([0,T], H^1_0)$ as image space for $S$, and $r>0$ small enough.
  (The $Y$-norm can always be bounded from below by the $L^\infty([0,T], H^1_0)$-norm.)

  To prove the analogous result for the pre-image space $X\cap L^4([0,T], L^p)$, one uses H\"older's inequality with twice the index $4$ instead of~\eqref{eq:aux852},
  \[
    \norm{(c_1-c_2)\, (Sc_1 - Sc_2)}_{L^2([0,T], L^2)}^2
    \leq C\| c_1-c_2 \|_{L^4([0,T], L^p)} \| Sc_1 - Sc_2 \|_{L^4([0,T], L^q)}.
  \]
  For the rest of the proof it is then sufficient to reduce $X$ to $X \cap L^4([0,T], L^p)$ and to set the image space to $L^4([0,T], L^q)$; notably, these choices merely yield reflexive Banach spaces.
  \qedhere
\end{proof}

Apart from condition~\eqref{eq:illposed:nichtlin2}, one can also show Lipschitz continuity of $c \mapsto S'c$ in the operator norm between $X$ and $L^2([0,T], L^2)$, that is, $\norm{S^\prime c - S^\prime c^+} \leq L\, \norm{c-c^+}_X$ holds for all $c\in B(c^+, r) \cap \mathcal D(S)$.
If one embeds the parameter space $X$ into a Hilbert space, this allows to show that local ill-posedness of $Sc = g$ at $c^+ \in \mathcal D(S)$ implies local ill-posedness of the linearized equation at $c^+$, see~\cite[Section 2]{HofmannScherzer98}.

Let us finally mention a classical result on the ill-posedness of equations involving $S$ composed with a measurement operator.
To this end, consider an operator $M$ such that the product $M \circ S$ is compact, continuous, and weakly sequentially closed  from, roughly speaking, all parameters in $H^2([0,T],L^2)$ that possess some lower real bound, into some separable Hilbert space $Z$.
(Precisely, the domain of definition is the set defined in~\eqref{eq:aux455} and included in $H^2([0,T],L^2)$.)
If $M \circ S$ additionally possesses an infinite-dimensional range, then the operator equation $(M\circ S) c = g$ is locally ill-posed at any parameter $c$ in this domain of definition, see, e.g.~\cite[Proposition A3]{Engl1989}.
This general result is independent of notions of derivatives and hence merely requires parameters $c$ in the set from~\eqref{eq:aux455}.

\section{Discretization of the wave equation}
\label{sec:discWave}

In this section we discuss the discretization of the wave equation~\eqref{eq:arwp} that we use to compute our numerical examples in Section~\eqref{sec:numerics}.
Recall that our existence theory treats the weak formulation of the wave equation,
\begin{equation*}
  \dup{u^\pprime(t)}{\phi} + \scp{\nabla u(t)}{\nabla \phi} + \scp{c(t)u(t)}{\phi} = \scp{f(t)}{\phi}
	\quad
	\text{for all $\phi\in H^1_0$ and \ale~$t\in[0,T]$,}
\end{equation*}
together with zero initial conditions $u(0) = u^\prime(0) = 0$.
We discretize the latter problem by Rothe's method, i.e.~we start by discretization in time, and consider the first-order system gained from $v\assign u^\prime$ as additional unknown,
\begin{equation}\label{eq:disk:system}
  \begin{aligned}
    \scp{u^\prime(t)}{\phi} - \scp{v(t)}{\phi} &= 0, \\
    \dup{v^\prime(t)}{\phi} + \scp{\nabla u(t)}{\nabla \phi} + \scp{c(t)u(t)}{\phi} &= \scp{f(t)}{\phi}.
  \end{aligned}
\end{equation}
Using a fixed step size $\Delta t > 0$ we obtain time steps
\[
  t_i \assign i\, \Delta t, \quad i=0,\dots, N-1, \quad N \assign 1 + \left\lceil T/\Delta t\right\rceil,
\]
write $u^i \assign u(t_i)$ and analogously $v^i$ and $f^i$ for $i=0, \dots, N-1$, and set
\[
a^i(\varphi_1,\varphi_2) = \scp{\nabla \varphi_1}{\nabla \varphi_2} + \scp{c(t_i) \varphi_1}{\varphi_2}, \quad \varphi_{1,2} \in H^1_0.
\]
We approximate all time derivatives in~\eqref{eq:disk:system} by a $\theta$-scheme, i.e.~a weighted average of forward- and backward difference quotients in $u^i$ and $v^i$. Further elmininating the dependence of the first equation on $v^i$ shows that $(u^i, v^i)$ solves
\begin{subequations}\label{eq:disk:semi}
  \begin{align}
    \scp{u^i}{\phi} + \theta^2 \Delta t^2 a^i(u^i, \phi) &= \scp{\theta \Delta t^2  \left(\theta f^i + (1-\theta) f^{i-1}\right) + u^{i-1} + \Delta t v^{i-1}}{\phi} \notag \\
    &\quad - \Delta t^2 \theta (1-\theta) a^{i-1}(u^{i-1}, \phi), \label{eq:disk:semi1}\\
    \scp{v^i}{\phi} &=  \scp{\Delta t \left(\theta f^i + (1-\theta) f^{i-1}\right) + v^{i-1}}{\phi} \notag \\
    &\quad - \Delta t\left (\theta a^i(u^i, \phi) + (1-\theta) a^{i-1}(u^{i-1}, \phi)\right),\label{eq:disk:semi2}
  \end{align}
\end{subequations}
for $i=1,\dots, N$, with initial values $u^0=v^0=0$.
Given $(u^{i-1}, v^{i-1})$, the first equation~\eqref{eq:disk:semi1} can be used to compute $u^i$ by solving one elliptic problem and then plug the result into~\eqref{eq:disk:semi2} to compute $v^i$ via a second elliptic problem.
We actually choose $\theta = 1/2$ to obtain the Crank-Nicolson scheme, which converges in each time step of second order as $\Delta t \to 0$.
The error of the last step and consequently the total error are hence of the first order in $\Delta t \to 0$. In addition, the scheme is unconditionally stable and does not exhibit energy loss, see~Larsson and Thomée~\cite{larsson:pde}.

To transform the semi-discrete system~\eqref{eq:disk:semi} into a fully discrete one we rely on the finite element method.
For technical simplicity we assume that $\Omega$ is a polygon and consider shape-regular and quasi-uniform triangulations $\mathcal S$ of $\Omega$ that satisfy $\overline \Omega = \bigcup_{S\in \mathcal S} \overline S$ and $\diam S \leq h$ for all $S\in \mathcal S$. For all simplexes $S\in \mathcal S$ we denote all affine mappings on $S$ as $\mathbb{P}_1(S)$ and introduce the finite-dimensional variational approximation spaces
\begin{equation*}
  V_h \assign \Set{\phi\in C(\overline\Omega) \cap H^1_0: \,  \left.\phi \right|_S \in \mathbb P_1(S) \text{ for all $S\in \mathcal S$}} \subset H^1_0,
	\quad
  h>0.
\end{equation*}
These finite-dimensional spaces define discrete approximations $u^i_h \in V_h$ and $v^i_h \in V_h$ to $u^i$ and $v^i$ by restricting the test function $\varphi$ in~\eqref{eq:disk:semi} to $V_h$, too.
Standard error estimates for, e.g.~the $L^2$-error between $u^i_h$ and $u^i$ indicate this error to be of second order in the diameter of the largest simplex of $\mathcal S$, see, e.g.~Brenner and Scott~\cite{brenner:fem}.

If we denote the nodal basis of $V_h$ by $\{ \phi_1, \dots, \phi_K \}$, then both $u^i_h$ and $v^i_h$ are represented by $K$ coefficients,
\begin{equation}
  u^i_h = \sum_{k=1}^K {\vec u}^i_k \phi_k, \quad v^i_h = \sum_{k=1}^K {\vec v}^i_k \phi_k. \label{eq:endlbasis}
\end{equation}
Linearity of~\eqref{eq:disk:semi} shows that the latter system is equivalent to the linear system of size $K\times K$ for $\vec u^i = \big({\vec u}^i_k \big)_k$ one gets by inserting~\eqref{eq:endlbasis} into~\eqref{eq:disk:semi}.
Let us define the mass matrix $M$ and stiffness matrix $A^i$ through
\begin{equation}
 M = \left(\scp{\phi_k}{\phi_j}\right)_{k,j=1,\dots, K} \in \R^{K\times K}, \quad A^i = \left(a^i(\phi_k,\phi_j)\right)_{k,j=1,\dots, K} \in \R^{K\times K} \label{eq:disk:matrizen}
\end{equation}
and abbreviate expressions involving $f$ as a vector $F^{i} = ( \scp{  \theta f^i + (1-\theta) f^{i-1}}{\phi_j} )_{j=1,\dots, K}$ in $\R^K$.
This establishes the fully discrete system
\begin{subequations}\label{eq:disk:voll}
\begin{align}
  \left(M+\theta^2 \Delta t^2 A^i\right) \vec u^i &= \theta \Delta t^2\, F^{i} + M \vec u^{i-1} + \Delta t M\,\vec v^{i-1} - \Delta t^2 \theta (1-\theta) A^{i-1}\, \vec u^{i-1}, \label{eq:disk:voll1}\\
  M \, \vec v^i &= \phantom{\theta}\Delta t \  \, F^{i} + M \vec v^{i-1} - \Delta t \theta A^i\, \vec u^i - \Delta t (1-\theta) A^{i-1}\, \vec u^{i-1},\label{eq:disk:voll2}
\end{align}
\end{subequations}
which is best solved using an iterative method like GMRES.\@
If $v^i$ is not needed for further computations then the solution of~\eqref{eq:disk:voll2} for $v^i$ may be omitted if $M v^i$ is stored instead. Since we regard $S$ as a map into $L^2([0,T], L^2)$ this is the case for us.

For the triangulation of $\Omega$, the bookkeeping of the basis functions, and the assembly of~\eqref{eq:disk:voll} we use the finite element toolbox \texttt{ALBERTA}~\cite{schmidt:alberta}.

Although $c$ is not the solution of a PDE we nevertheless discretize it as an element of $V_h$ at each of the time steps $t_i$, in the very same way as $u$. This approach has the advantage of not requiring additional data structures for searched-for parameters.

\section{Computation of adjoints of derivatives}
\label{sec:adjoints}

The discretization scheme~\eqref{eq:disk:voll} allows to numerically approximate $S$ and its derivative $S^\prime$.
The regularization scheme for $c$ that we present in Section~\ref{sec:illposed} however also requires an approximation of the (complex-valued) transpose operator $(S^\prime c)^*$.
For simplicity, we will rather require the adjoint operator later on since we artificially change into a Hilbert space framework in the next section. The importance of knowing such an operator is however already clear from linear regularization theory via filter functions.

From now on we consider $S^\prime$ to be a linear operator from $X$ into $L^2([0,T],L^2)$ and compute its transpose operator mapping $L^2([0,T],L^2)$ into $X^\prime$.
Note that $S^\prime$ from~\eqref{eq:frechet:weakderiv} can be decomposed as
\begin{equation}\label{eq:aux988}
  S^\prime c = L_c \circ M_c
\end{equation}
where, first, $\map {M_c}X{L^2([0,T], L^2)}$, $h \mapsto -uh$, multiplies $h$ by $-u = -Sc$.
Second, $L_c$ is a (weak) solution operator for the wave equation $w^\pprime - \laplace w + cw = g$ with variable right-hand side $g$,
\[
  \map {L_c}{L^2([0,T], L^2)}{L^2([0,T], L^2)}, \quad g \mapsto w,
\]
with zero initial and Dirichlet boundary conditions for $w$. By the above decomposition of $S^\prime c$ into two bounded linear operators we next compute the transpose $(S^\prime)^\ast$. The resulting numerical schemes will actually carry over to $\Phi^\prime c$ and $\mathbf S^\prime c$, such that we beforehand note the following corollary of~\eqref{eq:aux988}.
\begin{corollary}
Assume that $c\in \mathcal D(\mathbf S)$ and recall from~\eqref{eq:aux677} that $\Phi = \Psi \circ \mathbf S$.
  \begin{enumerate}[(1)]
  \item $\mathbf S$ and $\Phi$ are Fréchet-differentiable in $c$ and $\Phi^\prime c = \Psi \, \circ \, \mathbf S^\prime c$.
  For the solution operator $\mathbf L_c$ mapping $g\in L^2([0,T], L^2)^d$ to $(L_c\, g_1, \dots, L_c\, g_d)\in L^2([0,T], L^2)^d$
  and $\mathbf M_c \assign \left(M_{c,1}, \dots, M_{c,d}\right) \in \mathcal L(X, L^2([0, T], L^2)^d)$, with $M_{c, i} h \assign -h \, S_i c$ for $h\in X$, there holds
  \[
    \mathbf S^\prime c = \left( S_1^\prime c, \dots, S_d^\prime c \right) = \mathbf L_c \circ \mathbf M_c \in \mathcal L(X, L^2([0, T], L^2)^d).
  \]
  \item Further, $(\mathbf S^\prime c)^* = \mathbf M_c^* \circ \mathbf L_c^* \in \mathcal L(L^2([0, T], L^2)^d, X)$ and for $h\in L^2([0,T], L^2)^d$ there holds
  \[
    \mathbf L_c^*\, h = \left(L_c^*\, h_1, \dots, L_c^*\, h_d \right), \quad \mathbf M_c^*\, h = \sum_{i=1}^d M_{c,i}^* h_i \in X^\prime.
  \]
  \end{enumerate}
\end{corollary}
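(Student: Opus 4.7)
The plan is to handle (1) and (2) sequentially, both being essentially bookkeeping statements that reduce to the componentwise facts already established for the scalar operator $S$.

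For (1), I would first observe that $\mathbf{S}$ is simply the Cartesian product of the scalar solution operators $S_i$, and each $S_i$ is Fr\'echet differentiable at $c$ with the structure proven in Section~\ref{sec:frechet} (the existence and differentiability arguments depend only on having an $L^2$-source and are otherwise independent of the particular choice of $f_i$). Hence $\mathbf{S}$ is Fr\'echet differentiable with $\mathbf{S}^\prime c \,[h] = (S_1^\prime c\, [h], \dots, S_d^\prime c\,[h])$. Since $\Psi$ is linear and continuous, the chain rule immediately yields $\Phi^\prime c = \Psi \circ \mathbf{S}^\prime c$. To obtain the decomposition $\mathbf{S}^\prime c = \mathbf{L}_c \circ \mathbf{M}_c$ I would apply the scalar factorization~\eqref{eq:aux988} componentwise: $\mathbf{M}_c h = (-h\, S_1 c, \dots, -h\, S_d c)$, and then $\mathbf{L}_c$ acting diagonally on this vector produces exactly $(L_c(-h\, S_i c))_i = (S_i^\prime c\,[h])_i$.

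For (2), the identity $(\mathbf{S}^\prime c)^* = \mathbf{M}_c^* \circ \mathbf{L}_c^*$ is immediate from the standard rule $(A\circ B)^* = B^*\circ A^*$ once the factorization from~(1) is in place. The concrete formulas for the two transposes then follow from unwinding definitions. For $\mathbf{L}_c^*$, since $\mathbf{L}_c$ acts diagonally, testing against $g\in L^2([0,T],L^2)^d$ gives $\sum_i \scp{L_c h_i}{g_i} = \sum_i \scp{h_i}{L_c^* g_i}$, hence $\mathbf{L}_c^* g = (L_c^* g_1, \dots, L_c^* g_d)$. For $\mathbf{M}_c^*$, writing out the pairing $\dup{\mathbf{M}_c h}{g} = \sum_i \scp{M_{c,i} h}{g_i} = \sum_i \dup{h}{M_{c,i}^* g_i}_{X,X^\prime}$ and pulling the sum inside the duality bracket (by linearity) gives $\mathbf{M}_c^* g = \sum_{i=1}^d M_{c,i}^* g_i \in X^\prime$.

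I expect no real obstacle beyond routine diligence: the only point requiring attention is keeping track of which dualities are being used, since $\mathbf{L}_c^*$ lives in an $L^2$-Hilbert-space framework where the scalar product suffices, while $\mathbf{M}_c^*$ genuinely produces an element of the dual $X^\prime$. No new analytic content is needed, as every ingredient (boundedness of $L_c$ and $M_{c,i}$, Fr\'echet differentiability of the scalar $S$, and continuity of $\Psi$) is already on the table from the preceding sections.
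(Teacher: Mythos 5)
Your proof is correct and follows exactly the route the paper intends: the paper states this corollary without proof, treating it as an immediate consequence of the scalar decomposition $S^\prime c = L_c \circ M_c$ from~\eqref{eq:aux988}, component-wise Fr\'echet differentiability, the chain rule with the bounded linear $\Psi$, and the rule $(A\circ B)^* = B^*\circ A^*$. Your careful distinction between the Hilbert-space adjoint of $\mathbf L_c$ and the duality pairing defining $\mathbf M_c^*$ in $X^\prime$ matches the paper's setup precisely.
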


Let us now determine a numerically computable representation of the adjoint $L_c^\ast$ between $L^2([0,T], L^2)$ that is as usual characterized for $z$ and $f \in L^2([0,T], L^2)$ with $w\assign L_c f$ by
\begin{equation}\label{eq:theo:adjsol}
  \int_0^T \scp{z(t)}{w(t)} \dt = \scp{w}{z}_{L^2([0,T], L^2)} \overset != \scp{f}{L_c^* z}_{L^2([0,T], L^2)} = \int_0^T \scp{f(t)}{L_c^*z(t)} \dt .
\end{equation}
As $L_c^* z(t)$ belongs to $H^1_0$ we can replace the right-hand side by the weak formulation for $w$,
\begin{equation}\label{eq:frechet:lglatt}
  \int_0^T \scp{f(t)}{L_c^*z(t)} \dt =  \int_0^T \left[ \dup{w^\pprime(t)}{L_c^* z(t)} + \scp{\nabla w(t)}{\nabla L_c^* z(t)} + \scp{c(t) w(t)}{L_c^* z(t)} \right] \dt.
\end{equation}
Two partial integrations in time show by the initial conditions for $w$ that
\[
  \int_0^T \dup{w^\pprime(t)}{L_c^* z(t)} \dt = \int_0^T \dup{(L_c^* z)^\pprime(t)}{w(t)} \dt + w^\prime(T) (L_c^* z)(T) - w(T)(L_c^* z)^\prime(T).
\]
Hence,~\eqref{eq:theo:adjsol} is fulfilled if $h=L_c^* z \in Y$ is the weak solution to
\[
  h^\pprime(t) - \laplace h(t) + c(t) h(t) = z(t)
	\quad
	\text{for \ale} t\in [0,T],
\]
with zero Dirichlet boundary values and zero \emph{end conditions} $h(T) = h^\prime(T)=0$.
Due to the theory in~Section~\ref{sec:solutiontheory} the latter differential equation is uniquely solvable and $z\mapsto h$ defines a bounded linear operator on $L^2([0,T], L^2)$ that is numerically evaluated in the same way as $S$.

We now turn to the transpose $M_c^\ast: \, L^2([0,T], L^2) \to X^\prime$ of the multiplication operator $M_c$.
(The dual space $X^\prime$ is computed for a weighted inner product of $H^2([0,T]; L^2)$, see below.)
Writing $M_c^* z = w_z \in X^\prime$ for $z\in L^2([0,T], L^2)$, the function $w_z \in X$ has to satisfy the equality
\begin{equation}\label{eq:disk:mstar}
  \scp{-uh}{z}_{L^2([0,T], L^2)} = \dup{h}{w_z}_{X\times X^\prime}
	\quad
	\text{for every $h\in X$.}
\end{equation}
Loosely speaking, $w_z$ can hence be interpreted as a smoothed version of $-uz$.
As $M_c^*$ is the only operator in our reconstruction scheme mapping into $X$, it actually controls smoothing of the searched-for parameter.
It is practical to steer this smoothing by weights $\alpha, \beta > 0$ that define the following duality product, extending the analogous weighted inner product of $L^2([0,T], L^2)$,
\begin{equation}\label{eq:dualproduct}
  \dup{g}{f}_{X\times X^\prime} \assign \dup{g}{f}_{L^2([0,T], L^p) \times L^2([0,T], L^{q})} + \alpha \scp{g^\prime}{f^\prime}_{L^2([0,T], L^2)} + \beta \scp{g^\pprime}{f^\pprime}_{L^2([0,T], L^2)}
\end{equation}
for all $g\in X$, the dual Lebesgue index $q \in [1,2]$ such that $1/p+1/{q} = 1$, and $f\in X^\prime = H^2([0,T], L^2) \cap L^2([0,T], L^{q})$.
% Since $\alpha$ and $\beta$ are positive this does not change the space.
Thus, we obtain from~\eqref{eq:disk:mstar} that
\begin{align*} % \label{eq:disk:mstarvar}
  \int\limits_0^T &\dup{h(t)}{-u(t)z(t)}_{L^p\times L^{q}} \dt
  %	= \int_0^T \dup{h(t)}{w_z(t)}_{X\times X^\prime} \dt
  = \int\limits_0^T \left[ \dup{h(t)}{w_z(t)}_{L^p\times L^{q}} + \alpha \scp{h^\prime(t)}{w_z^\prime(t)} + \beta \scp{h^\pprime(t)}{w_z^\pprime(t)} \right] \dt,
\end{align*}
which is a weak formulation of a fourth-order differential equation in time.
Discretization of the last problem for $w_z$ via finite elements seems most natural but is indeed tedious as conforming finite element spaces need to be $H^2$-smooth in time, which is typically not pre-coded in open finite element packages.

Following the latter idea by via finite-dimensional subspaces of $X^\prime$ leads into Banach-space valued regularization schemes that we do merely for simplicity not consider in this paper.
% In this case we could try to evaluate $M_c^*$ for these basis functions first (maybe even using pen and paper) and obtain $M_c^*$ through linearity. For the discretization size used in \autoref{sec:numerics} this would not create memory problems.
Instead, we formally use a simpler finite difference scheme in each individual spatial degree of freedom that arises by first discretizing the latter problem in space:
As in the last section, we represent test functions $h\in X$ as $h(t,x) = \sum_{k=1}^K \vec h(t)_k \phi_k(x)$ such that $h(t)\in V_h$ for every $t\in [0,T]$ with some $\vec h = (\vec h_k)_{k=1,\dots, K} \in H^2([0,T])^K$.
In the same way we define $\vec u, \vec z \in L^2([0,T])^K$, $\vec w_z\in H^2([0,T])^K$,
and recall the mass matrix $M$ from~\eqref{eq:disk:matrizen}.
We denote by $\vec u \lcdot \vec z$ the component-wise multiplication of $\vec u$ and $\vec z$, and deduce by (formal) partial integration that
\begin{align*}
 - \int_0^T &\vec h(t)^\top  M  \left(\vec u(t) \lcdot \vec z(t)\right) \dt = \int_0^T \left[\vec h(t)^\top  M  \vec w_z(t) + \alpha  \vec h^\prime(t)^\top  M  \vec w_z^\prime(t) + \beta   \vec h^\pprime(t)^\top  M  \vec w^\pprime_z(t) \right] \dt \\
 % &= \int_0^T \vec h(t)^\top  \left(M  \vec w_z(t) - \alpha  M  \vec w_z^\prime(t)\right) - \beta   \vec h^\prime(t)^\top  M  \vec w^{\pprime\prime}_z(t) \dt \\
 % &\qquad + \left[ \alpha  \vec h(t)^\top  M  \vec w_z^\prime(t) + \beta  \vec h^\prime(t)^\top  M  \vec w_z^\pprime(t) \right]_0^T \\
 &= \int_0^T \vec h(t)^\top  \left(M  \vec w_z(t) - \alpha  M  \vec w_z^{\prime\pprime}(t) + \beta   M  \vec w^{(4)}_z(t) \right) \dt \\
 &\qquad\quad+ \left[ \alpha  \vec h(t)^\top  M  \vec w_z^\prime(t) + \beta  \vec h^(t)^\top  M  \vec w_z^\pprime(t) - \beta  \vec h(t)^\top  M  \vec w_z^{\prime\pprime}(t) \right]_0^T.
\end{align*}
The above equation is fulfilled if $v_k \assign (M \vec w_z)_k$ solves for $k=1,\dots, K$ the one-dimensional ordinary differential equations
\begin{equation}\label{eq:aux1102}
  \begin{cases} v_k - \alpha  v_k^\pprime + \beta v_k^{(4)} = - \left( M \left(\vec u \lcdot \vec z\right) \right)_k, &\\
  v_k^\pprime(0) = v_k^\pprime(T) = 0, \quad \alpha v_k^\prime(0) = \beta v_k^{\prime\pprime}(0), \quad \alpha v_k^\prime(T) = \beta v_k^{\prime\pprime}(T). &
  \end{cases}
\end{equation}
In our numerical examples, we solve these systems at the time points $t_i = i\, \Delta t$, $i=0,\dots, N-1$, that we already fixed when solving for $u \in Y$ or representing $c \in X$. We continue to use the notation $v_k^i \assign v_k(t_i)$ and replace the appearing time derivatives by the standard central difference quotients up to order four, see, e.g.~\cite{fornberg:finitedifference}.
%
%\begin{align*}
  %v_k^\prime(t_i) &\approx \frac{v_k^{i+1} - v_k^{i-1}}{2 \Delta t}, \quad
  %v_k^\pprime(t_i) \approx \frac{v_k^{i+1} - 2 v_k^i + v_k^{i-1}}{\Delta t^2} \\
  %v_k^{\pprime\prime}(t_i) &\approx \frac{v_k^{i+2} - 2 v_k^{i+1} + 2 v_k^{i-1} - v_k^{i-2}}{2 \Delta t^3}, \\
  %v_k^{(4)}(t_i) &\approx \frac{v_k^{i+2} - 4 v_k^{i+1} + 6 v_k^{i} - 4 v_k^{i-1} + v_k^{i-2}}{\Delta t^4}.
%\end{align*}
%
The resulting fully discrete equations at the time points $t_i$ then read
\begin{align}
  - \Delta t^4 ( M (\vec u^i &\lcdot \vec z^i) )_k = \Delta t^4 v_k^i \! - \alpha \Delta t^2 \left(v_k^{i+1} - 2 v_k^i + v_k^{i-1}\right)\! +\! \beta \left( v_k^{i+2}\! - 4 v_k^{i+1}\! + 6 v_k^i - 4 v_k^{i-1} + v_k^{i-2} \right) \notag \\
  &\hspace{-0.75cm}= \beta v_k^{i-2} - \left(\alpha \Delta t^2 + 4\beta \right) v_k^{i-1} + \left( \Delta t^4 + 2 \alpha \Delta t^2 + 6 \beta \right) v_k^i - \left(\alpha \Delta t^2 + 4\beta \right) v_k^{i+1} + \beta v_k^{i+2}.\label{eq:adjm1}
\end{align}
The latter equation requires \enquote{imaginary} nodes at $t_{-2}, t_{-1}, t_N$ and $t_{N+1}$ that enforce the boundary conditions at $i=0$ (for $t=0$) and $i=N$ (for $t=T$) in the second line of~\eqref{eq:aux1102}, i.e.
\begin{align}
  0 &= v_k^{-1} - 2 v_k^0 + v_k^1, \\
  0 % & = \alpha \Delta t^2 (v^1_k - v_k^{-1}) - \beta (v_k^2 - 2 v_k^1 + 2 v_k^{-1} - v_k^{-2}) \notag \\
    &= \beta v_k^{-2} - (2\beta + \alpha \Delta t^2) v_k^{-1} + (2\beta + \alpha \Delta t^2) v_k^1 - \beta v_k^2, \\
%\end{align}
%for $i=0$ and analogously
%\begin{align}
  0 &= v_k^{N-2} - 2 v_k^{N-1} + v_k^N, \\
  0 &= \beta v_k^{N-3} - (2\beta + \alpha \Delta t^2) v_k^{N-2} + (2\beta + \alpha \Delta t^2) v_k^N - \beta v_k^{N+1}.\label{eq:adjm2}
\end{align}
To sum up, our scheme for the numerical evaluation of $M_c^* z$ works as follows:
\begin{enumerate}[1.]
  \item Compute the matrix-vector products $M (\vec u^i\lcdot \vec z^i)$ for $i=0, \dots, N-1$.
  \item Solve the $N+4$-dimensional linear system~\eqref{eq:adjm1}-\eqref{eq:adjm2} for every degree of freedom $k=1, \dots, K$ in space and as a result obtain the vector $(v_k^i)_{i=0,\dots, N-1}$.
  \item Determine the coefficients of $\vec w_z(t_i)$ of $(M_c^* z)(t_i)$ with respect to the basis of $V_h$ as the solution of $M \vec w_z(t_i) = (v^i_k)_{k=1, \dots, K}$ for every time step $t_i$.
\end{enumerate}
In our numerical examples in Section~\ref{sec:numerics} the number $K$ of degrees of freedom in space will be much higher than the number $N$ of time steps. (Typical orders of magnitudes for $n=3$ are $N\approx 200$ and $K\approx 5000$.)
Thus, the execution of step 1 in the scheme above as well as the solution of $N$ linear systems with the sparse mass matrix in step 3 can be done fast compared to, e.g.~the numerical solution of a forward problem. Since the matrix of the $(N+4)\times (N+4)$ dimensional system~\eqref{eq:adjm1}-\eqref{eq:adjm2} does not change throughout the reconstruction it can be factored once and then used for the fast solution of the $K$ equations in step 2. We use the library \texttt{SuperLU}~\cite{li:superlu} for this, which exploits sparsity of the linear system.

Note that the computation of $M_c^*$ would greatly simplify if $c$ did not have to be so smooth in time.
If, e.g.~the solution operator $S$ turned out to be well-defined in some open subset of $H^1([0,T], L^2)$, then the simpler equation $v_k - \alpha \, v_k^\pprime = - (\, M \left(\vec u \lcdot \vec z\right) )_k$ would arise in~\eqref{eq:aux1102} for $\alpha>0$, subject to homogeneous end conditions.
This problem could be easily approximated with finite elements.
If the entire setting even required no smoothness of $c$ at all, then the variant of the multiplication operator $M_c$ operating on $L^2([0, T], L^2)$ would even become self-adjoint.

\section{Regularization using inexact Newton iterations}
\label{sec:reginn}

In the preceding sections we showed well-definedness, continuity and differentiability of the solution operator $S$ and the measurement operator $\Phi$.
We assume now that
\begin{subequations}
  \begin{align}
    \Phi c^+ &= g^+ \quad \text{for }c^+\in \mathcal D(S) \subset X,\ g^+ \in (\R^l)^d, \label{eq:reg:phicg}\\
    \mathbf S c^+ &= u^+ \quad \text{for } c^+\in \mathcal D(S) \subset X,\ u^+ \in L^2([0,T], L^2)^d, \label{eq:reg:scu}
  \end{align}
\end{subequations}
to take a look at a particular regularization scheme that stably approximate $c^+$ from data.
More precisely, we propose inversion by the REGINN (\enquote{REGularization based on INexact Newton iteration}) algorithm, which was stated and analyzed by Rieder~\cite{rieder:reginn}.
We give a brief reminder how REGINN works, by considering merely the first inverse problem in~\eqref{eq:reg:phicg}.

In the entire section we actually neglect that the pre-image space $X$ is a Banach- instead of a Hilbert space. In our numerical  experiments, we instead use $H^2([0,T], L^2) \supset X$ as Hilbert space, as more involved schemes in Banach spaces are out of the scope of this paper.

Of course, we do not assume that data $g^+$ can be measured exactly but instead suppose to know some noisy version $g^\epsilon$ with relative noise level $\epsilon>0$, i.e.~$\norm{g^+-g^\epsilon} \leq \epsilon \norm{g^+} \approx \epsilon \norm{g^\epsilon}$.
As is customary, we assume to know $\epsilon>0$ a-priori.
We already mentioned that REGINN relies on successive linearization of~\eqref{eq:reg:phicg} starting with some initial guess $c_0\in \mathcal D(S)$ to generate a sequence $(c_k)_{k\in\N_0}\subset \mathcal D(S)$ of approximations of $c^+$.
Writing $c^+ = c_k + s_k^+$ for each $k\in \N_0$, the best update $s_k^+$ solves
\[
  (\Phi^\prime c_k)[s_k^+] = g^+ - \Phi c_k - E(c^+, c_k) \assigns b_k^+.
\]
Because of the linearization error $E(c^+, c_k)$ and the exact data $g^+$ we only know a perturbed right-hand side $b_k^\delta = g^\epsilon - \Phi c_k$. Its noise level $\delta$ is also unknown, as we only have $\|b_k^\delta - b_k^+\| \leq \epsilon \norm{g^\epsilon} + \mathcal O(\|c^+-c_k\|^2)$. REGINN applies a regularization method for linear inverse problems to this problem and stops it when the relative linear residuum is smaller than a tolerance times the non-linear residuum.
In our case the former is done via the method of conjugate gradients (CG), which creates an inner iteration that computes a sequence of approximations $(s_{k,i})_{i\in\N}$ of $s_k^+$. The stopping is done by choosing tolerances $\mu_k\in (0,1)$ and picking $s_k\assign s_{k,i_k}$ with
\begin{equation}\label{eq:reg:reginnabbruch}
  i_k \assign \min \Set{ i\in \N | \|(\Phi^\prime c_k) s_{k,i} - b_k^\delta \| < \mu_k \|b_k^\delta \|}.
\end{equation}
Afterwards we can set $c_{k+1} \assign c_k + s_k$ and continue the iteration,
which we stop using the discrepancy principle by a fixed parameter $\tau>1$,
\begin{equation}\label{eq:reg:reginndisc}
  k^* = k^*(\epsilon, g^\epsilon) \assign \min \Set{k\in \N | \|\Phi c_k - g^\epsilon \| \leq \tau \epsilon \, \|g^\epsilon \|}.
\end{equation}
The combination of REGINN with CG as inner regularization method was also analyzed by Rieder~\cite{rieder:cgreginn}.
Convergence is only guaranteed if the $\mu_k$ stay in the interval $[a,b]\subset (0,1)$ where $a$ and $b$ depend on unknown constants, like $\eta$ in the non-linearity condition.
Due to the shrinking linearization error we want to be able to reduce $\mu_k$ during the outer iteration.
On the other hand this reduction should not increase the computing time (number of CG-steps) of the next outer step too much.
Rieder proposes the following strategy in~\cite{rieder:reginn}:
Start with $\mu_1 = \mu_2 = \mu_{\text{start}}\in (0,1)$ and for $k\geq 3$ define
\begin{equation}\label{eq:reginn:muk}
  \tilde \mu_k =
  \begin{cases}
    1 - \frac{i_{k-2}}{i_{k-1}} ( 1 - \mu_{k-1}) & \text{if } i_{k-1} > i_{k-2}, \\
    \gamma \,\mu_{k-1} & \text{else.}
  \end{cases}
\end{equation}
The tolerance $\mu_k$ is then set to
\begin{equation} \label{eq:aux1285}
  \mu_k = \mu_{\text{max}} \max\left \{\tau \epsilon {\big \|g^\epsilon\big \|}/{\big \| g^\epsilon - \Phi c_k \big \|}, \tilde \mu_k\right \}
\end{equation}
where $\mu_{\text{max}} \in (\mu_\text{start}, 1)$. This achieves a $\gamma$-linear reduction of $\mu_k$ with $\gamma\in(0,1)$ if the number of inner steps is decreasing. We use $\mu_{\text{start}} = 0.7$, $\gamma = 0.9$ and $\mu_{\text{max}} = 0.99$.
Algorithm~\ref{alg:reginn} lists a pseudo-code for the whole reconstruction procedure.

\begin{algorithm}
\caption{REGINN for solving $\Phi c = g$}
\label{alg:reginn}
\begin{algorithmic}
  \State\textbf{Required}:\ Hilbert Spaces $X, Y$, $\map \Phi X Y$,
  \State\phantom{\textbf{Required}:\ }\ $c_0\in X$, $g^\epsilon \in Y$, $\norm{g-g^\epsilon} \leq \epsilon \norm{g^\epsilon}$
  %\\
  \State$k\gets 0$
  \While{$\norm{\Phi c_k - g^\epsilon} > \epsilon \norm{g^\epsilon}$}
    \State$k\gets k+1$
    \State$\mu_k \gets$ parameter adaption rule~\eqref{eq:aux1285}
    \State$s_k\gets 0$
    \While{$\norm{(\Phi^\prime c_k)[s_k] - g^\epsilon - \Phi c_k} > \mu_k \norm{g^\epsilon - \Phi c_k}$}
      \State$s_k \gets$ next CG-iterate for equation $(\Phi^\prime c_k)[s] = g^\epsilon - \Phi c_k$
    \EndWhile%
    \State$c_k \gets c_{k-1} + s_k$
  \EndWhile%
\end{algorithmic}
\end{algorithm}

\section{Numerical examples}
\label{sec:numerics}

We want to show in this last section that REGINN with the CG-iteration is indeed able to provide an estimate of a time- and space-dependent parameter $c$ in acceptable time, especially for $n=3$.
To this end, we set $T\assign 2$, $\mathds 1 \assign (1)_{i=1,\dots, n}$, and reconstruct two different parameters from artificial data measured in $\Omega \assign (0,1)^n$ with $n\in \{1,2,3\}$.
The first parameter is hat-shaped and moves in time from $1/4 \cdot \mathds 1$ to $3/4\cdot \mathds 1$,
\[
  c_{\text{hat}}(t,x) \assign 20 \, h \!\left(4\, \norm{x - \tfrac{1+t}4 \mathds 1} \right),
	\quad
	\text{with }
  h(s) \assign \begin{cases}
  \exp\left(1-\frac{1}{1-s^2}\right)  \quad &\text{if $|s|<1$,} \\
  0 & \text{else.}
  \end{cases}
\]
This parameter is smooth in time and space, at least in theory.
Since spatial smoothness is actually not required by the parameter space $X$, we also test a parameter with discontinuities,
\[
  c_{\text{plateau}}(t,x) \assign \begin{cases}
  20 (1-|t-1|^2) \quad &\text{if $\norm{x-\tfrac12 \mathds 1}<\tfrac 14$,} \\
  0 & \text{else.}
  \end{cases}
\]
In all calculations we use the finite element interpolation of $c_{\text{plateau}}$ in $V_h$, which is of course continuous but has a sharp edge at $\partial B(\tfrac 12 \mathds 1, \tfrac 14)$.

Motivated by possible applications we set $d \assign 2^n$ at positions into the domain $\Omega$; each of the elements of their position vectors $x^a_k$ for $k\in \{1,\dots, d\}$ either equals to $1/3$ or 2/3.
We further consider that each actuators excites a wave in $\Omega$ that we model by $d$ right-hand sides $f_1,\dots, f_d$.
% As spatial shape of these excitations we use the same functions $\psi$ from~\eqref{eq:psi} as for the sensors.
Precisely, for frequency $\omega = 8\pi$ and actuator radius $r^a = 0.1$ we define $\map {f_k}{\R \times \R^3}\R$ by
\[
	f_k(t,x) \assign \left( 1 - \frac{\norm{x-x^a_k}}{r^a} \right) \sin(\omega t) \quad \text{if}\ \norm{x-x^a_k}\leq r^a \text{ and } t \geq 0,
\]
and $f_k(t,x) = 0$ else.

For the discretization we define $\Delta t\assign 10^{-2}$ and employ a spatial grid consisting of $6$, $5$ or $4$ global refinements of the trivial triangulation of $(0,1)^n$ for $n=1,2$ or $3$, respectively.
The finite element interpolations of both parameters evaluated at $t=1.7$ are shown in \autoref{abb:vergleich:params}.
(Here and in subsequent figures we restrict ourselves to $n=2$.)

\begin{figure}[t]%
  \centering
  \subfloat[$c_{\text{hat}}(1.7, \cdot)$.]{\label{abb:vergleich:cmovinghat}
  \includegraphics{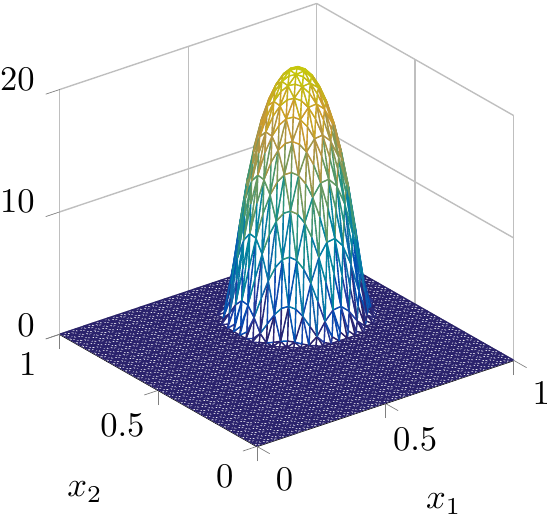}}%
  \subfloat[$c_{\text{plateau}}(1.7, \cdot)$.]{\label{abb:vergleich:cplateau}
  \includegraphics{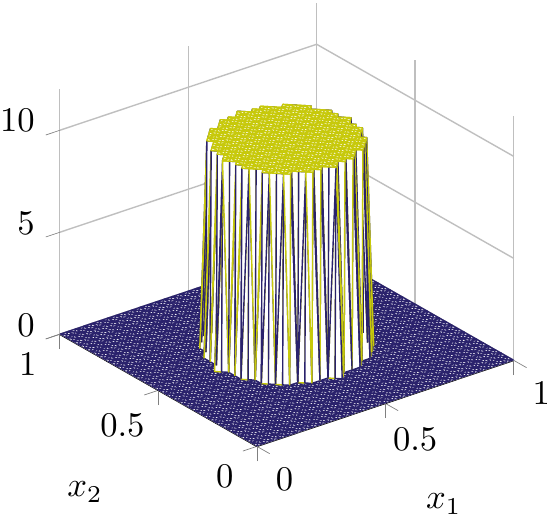}}%
  \caption{Finite element interpolations of both parameters in the case $n=2$.}\label{abb:vergleich:params}
\end{figure}%

In~\eqref{eq:dualproduct} we introduced numbers $\alpha$ and $\beta$ in front of the first and second order terms of the $H^2([0,T], L^2)$~scalar product in order to control the smoothness of the reconstruction.
Our primary goal is to minimize the $L^2([0,T], L^2)$-error to the exact parameter. For this we chose $\alpha$, $\beta$ in such a way that numerical approximations of $\alpha \|c^\prime \|^2$ and $\beta \|c^\pprime \|^2$ are one order of magnitude smaller than $\|c\|^2$. Tests with both parameters led us to define $\alpha \assign 2 \cdot 10^{-2}$ and $\beta \assign 2 \cdot 10^{-3}$.

We start our numerical experiments by checking whether the reconstruction $c_{k^*(\epsilon)}$ converges to the exact parameter in the $X$- or the $L^2([0,T], L^2)$-norm when $\epsilon$ tends to $0$.
We do so by applying REGINN to artificial data $u^\epsilon\in L^2([0,T], L^2)^d$ with relative noise level $\epsilon>0$, i.e.~${\|u^\epsilon - u\|}_{L^2([0,T], L^2)^d} = \epsilon \norm{u}$ for $u=\mathbf Sc$. The additive noise consists of a scaled vector of uniformly distributed pseudo-random numbers in $[-1,1]$.
The stopping index $k^*(\epsilon)$ is determined by the discrepancy principle with $\tau=2$, see~\eqref{eq:reg:reginndisc}.

\begin{figure}[!htb]%
  \centering
  \subfloat[$c_{k^*}(1.0, \cdot)$ for $c_{\text{hat}}$.]{%\label{abb:vergleich:cmovinghat}
  \includegraphics{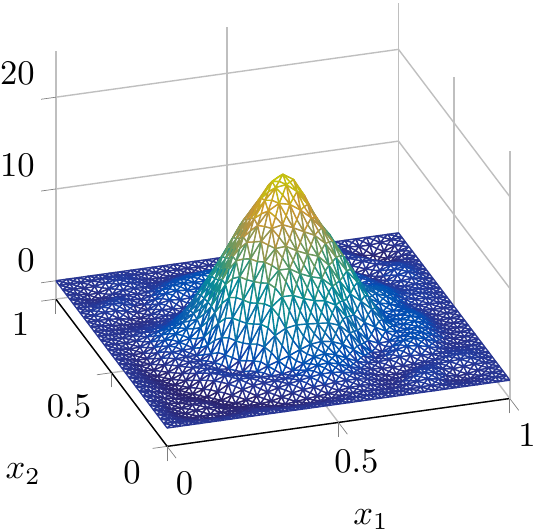}
  }
  \subfloat[$c_{k^*}(1.7, \cdot)$ for $c_{\text{hat}}$.]{%\label{abb:vergleich:cmovinghat}
  \includegraphics{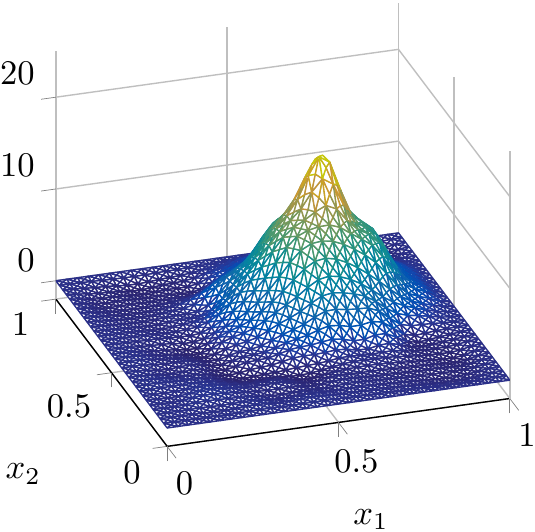}
  }

  \subfloat[$c_{k^*}(1.0, \cdot)$ for $c_{\text{plateau}}$.]{%\label{abb:vergleich:cplateau}
  \includegraphics{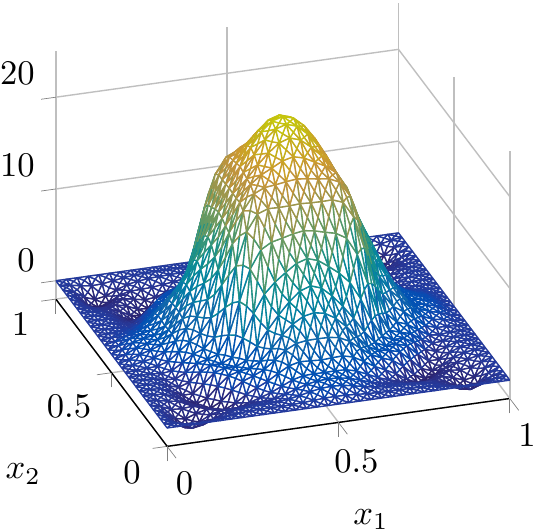}
  }
  \subfloat[$c_{k^*}(1.7, \cdot)$ for $c_{\text{plateau}}$.]{%\label{abb:vergleich:cplateau}
  \includegraphics{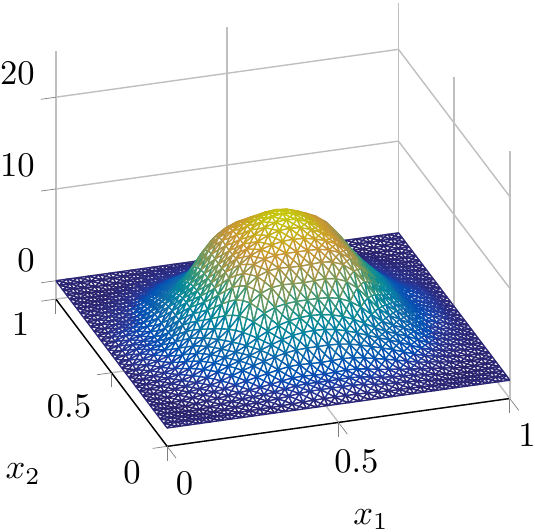}
  }
  \caption{Reconstructions from $u^\epsilon$ with $\epsilon=10^{-2}$ for $n=2$.}\label{abb:vergleich:paramsrekonstru}
\end{figure}

For $\epsilon=10^{-2}$ both reconstructions are satisfactory, as can be seen in \autoref{abb:vergleich:paramsrekonstru}.
Although the time dependence can already be deduced from these reconstructions, the $L^2$-errors are relatively high and amount to $45\%$ for $c_{\text{hat}}$ and $37\%$ when estimating $c_{\text{plateau}}$. The corresponding values in one and three spatial dimensions as well as the $H^2$-errors are listed in \autoref{tab:vergleich:paramsrekonstru}. In all dimensions the $H^2$-error for the moving hat is much higher than the $L^2$-error; for the other parameter both norms yield similar values.

\begin{table}[htbp]
  \centering
  \begin{tabularx}{0.87\textwidth}{Xcccc}\toprule
     & \multicolumn{2}{c}{$c_{\text{hat}}$} & \multicolumn{2}{c}{$c_{\text{plateau}}$} \\
     & $L^2([0,T], L^2)$ & $H^2([0,T], L^2)$ & $L^2([0,T], L^2)$ & $H^2([0,T], L^2)$ \\ \midrule
    $n=1$ & $26.28 \%$ & $57.93 \%$ & $29.47 \%$ & $33.94 \%$ \\
    $n=2$ & $44.68 \%$ & $71.90 \%$ & $37.06 \%$ & $41.15 \%$ \\
    $n=3$ & $58.85 \%$ & $81.66 \%$ & $40.71 \%$ & $43.50 \%$ \\
    \bottomrule
  \end{tabularx}
  \caption{Errors of the reconstruction from $u^\epsilon$ for $\epsilon=10^{-2}$.}\label{tab:vergleich:paramsrekonstru} % mit REGINN+CG
\end{table}

\begin{figure}[!htbp]%
  \centering
  \subfloat[$c_{\text{hat}}$.]{%\label{abb:disk:konvdtlok}
   \includegraphics{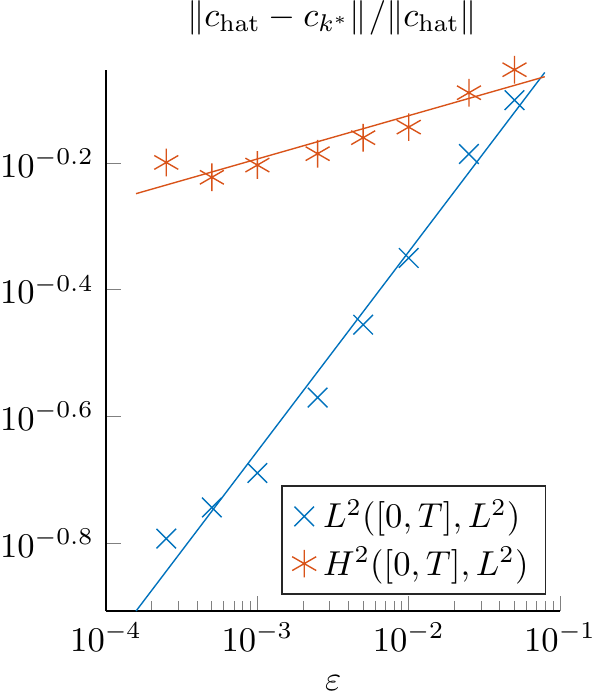}
  }
  \subfloat[$c_{\text{plateau}}$.]{%\label{abb:disk:konvdttot}
   \includegraphics{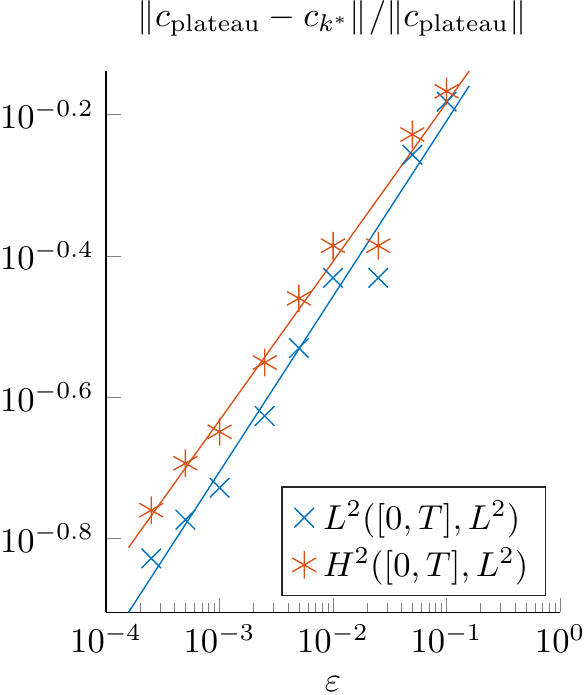}
  }
  \caption{Dependence of the reconstruction error on $\epsilon$ when reconstructing from $u^\epsilon$ in the case $n=2$.}\label{abb:vergleich:konvergenzcg}
\end{figure}%

\autoref{abb:vergleich:konvergenzcg} shows the dependence of these errors on $\epsilon$ in a logarithmic scale.
While both errors clearly converge for $c_{\text{plateau}}$, this is at least questionable for the $H^2$-error of the moving hat.
This leads to the hypothesis that $c_{\text{hat}}$ is not sufficiently smooth in time, which also seems to be the case in one and three space dimensions, see~\autoref{tab:vergleich:konvergenz}.
The $L^2$-error is approximately of order $\mathcal O(\epsilon^{0.3})$, at least for $\epsilon \in [2.5\cdot 10^{-4}, 5 \cdot 10^{-2}]$. For very small $\epsilon$ we expect a saturation of the error due to the fixed discretization.
In the case of $c_\text{hat}$ the behavior of the $L^2$-error in \autoref{abb:vergleich:konvergenzcg} already hints at this effect for $\epsilon \leq 10^{-3}$.

\begin{table}[htbp]
  \centering
  \begin{tabularx}{0.87\textwidth}{ccccc}\toprule
     & \multicolumn{2}{c}{$c_{\text{hat}}$} & \multicolumn{2}{c}{$c_{\text{plateau}}$} \\
     & $L^2([0,T], L^2)$ & $H^2([0,T], L^2)$ & $L^2([0,T], L^2)$ & $H^2([0,T], L^2)$ \\ \midrule
    \multirow{1}{*}{$n=1$} & $0.25$ & $0.00$ & $0.29$ & $0.30$ \\
      \multirow{1}{*}{$n=2$} & $0.31$ & $0.07$ & $0.25$ & $0.22$ \\
      \multirow{1}{*}{$n=3$} & $0.32$ & $0.08$ & $0.33$ & $0.29$ \\
      \bottomrule
  \end{tabularx}
  \caption{Numerically observed orders of convergence when reconstructing from $u^\epsilon$.}\label{tab:vergleich:konvergenz}
\end{table}

Now we turn to reconstructing $c$ from incomplete noisy measurements $g^\epsilon \approx \Phi c \in (\R^l)^d$, considering two measurement setups. The first one consists of $5^n$ sensors which are grid-like distributed in $\Omega$, as shown in Figure~\ref{abb:vergleich:gebiet}. Each sensor generates measurements for $20$ equidistant times in $(0,T)$. This defines the space-time-positions $(t_i^s, x_i^s)_{i=1,\dots, l}$ of $l=20\cdot 5^n$ measurement points.
We furthermore set $r_x \assign 0.05$ and $r_t\assign 0.02$.

In a real application it might be impossible (or inaffordable) to fill the whole domain with sensors.
For $n=2$ we simulate this by placing the $25$ sensors on the left and lower edges of the domain. To avoid overlap we reduce the sensor radius in this case from $0.05$ to $0.035$.

\begin{figure}[tb]
  \centering
    \subfloat[Grid-shaped arrangement.]{
      \includegraphics{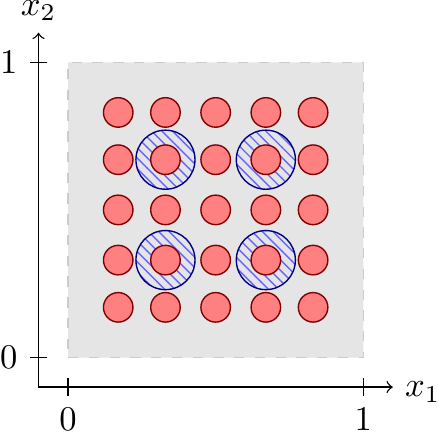}\label{abb:vergleich:gebiet}
    }
    \subfloat[L-shaped arrangement.]{
      \includegraphics{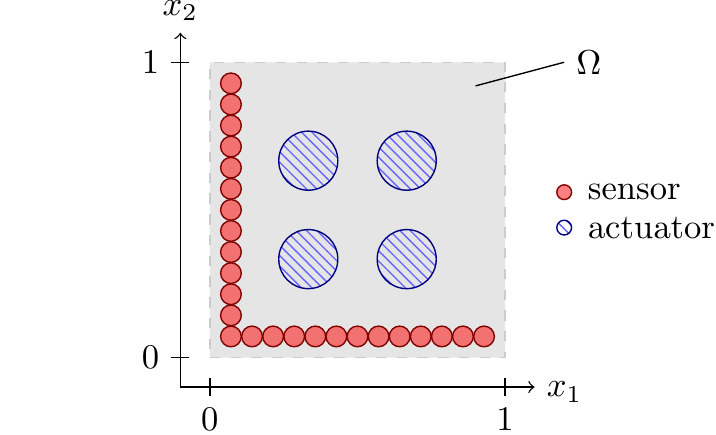}\label{abb:vergleich:gebietl}
    }
  \caption{Distribution of $4$ actuators and $25$ sensors in $\Omega=(0,1)^2$.}
\end{figure}

The reconstructions for $n=2$ from $2000$ values ($20\cdot 5^2$ values for each of the $2^2$ right-hand sides) in the grid-like setting with $1\%$ artificial noise are shown in \autoref{abb:vergleich:paramsrekonstrg}.
They look very similar to the reconstructions in \autoref{abb:vergleich:paramsrekonstru}, where the whole wave (about $1.6 \cdot 10^6$ degrees of freedom) was available.
The errors are listed in \autoref{tab:vergleich:paramsrekonstrg} and most of them are only slightly higher than the corresponding entry of \autoref{tab:vergleich:paramsrekonstru}. For $1\%$ noise this measurement setup seems to be sufficient to obtain roughly the same reconstruction quality as from $u^\epsilon$.

\begin{figure}[!htb]
  \centering
  \subfloat[$c_{k^*}(1.0, \cdot)$ for $c_{\text{hat}}$.]{
    \includegraphics{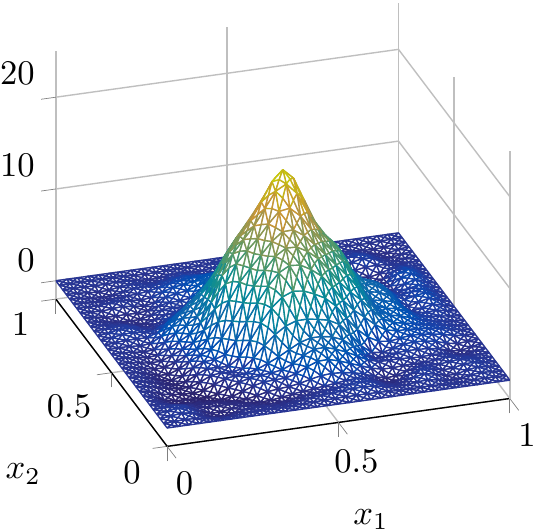}
  }
  \subfloat[$c_{k^*}(1.7, \cdot)$ for $c_{\text{hat}}$.]{
    \includegraphics{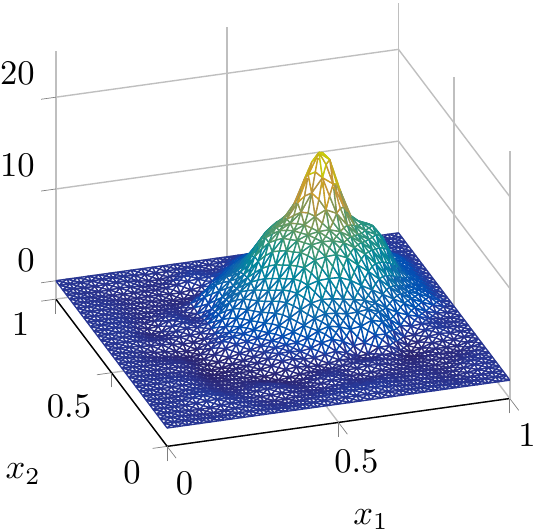}
  }

  \subfloat[$c_{k^*}(1.0, \cdot)$ for $c_{\text{plateau}}$.]{
    \includegraphics{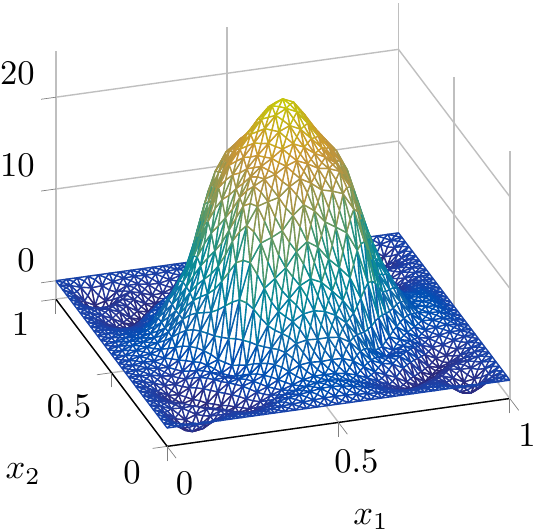}
  }
  \subfloat[$c_{k^*}(1.7, \cdot)$ for $c_{\text{plateau}}$.]{
    \includegraphics{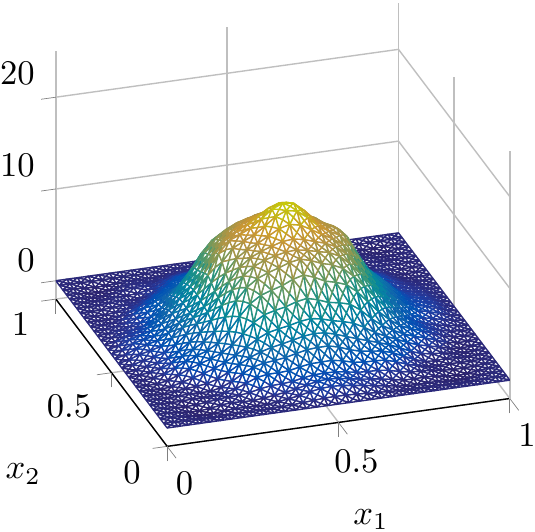}
  }
  \caption{Reconstructions from $g^\epsilon$ with $\epsilon=10^{-2}$ for $n=2$.}\label{abb:vergleich:paramsrekonstrg}
\end{figure}

\begin{table}[tb]
  \centering
  \begin{tabularx}{0.87\textwidth}{Xcccc}\toprule
     & \multicolumn{2}{c}{$c_{\text{hat}}$} & \multicolumn{2}{c}{$c_{\text{plateau}}$} \\
     & $L^2([0,T], L^2)$ & $H^2([0,T], L^2)$ & $L^2([0,T], L^2)$ & $H^2([0,T], L^2)$ \\ \midrule
    $n=1$ & $26.55 \%$ & $58.74 \%$ & $27.28 \%$ & $43.67 \%$ \\
    $n=2$ & $46.11 \%$ & $73.02 \%$ & $34.48 \%$ & $39.24 \%$ \\
    $n=3$ & $66.17 \%$ & $84.72 \%$ & $49.30 \%$ & $51.49 \%$ \\
    \bottomrule
  \end{tabularx}
  \caption{Errors of the reconstruction from $g^\epsilon$ for $\epsilon=10^{-2}$.}\label{tab:vergleich:paramsrekonstrg}
\end{table}

When repositioning the sensors as shown in Figure~\ref{abb:vergleich:gebietl} the quality of the reconstruction decreases, as can be seen in \autoref{abb:vergleich:rekonstrl}. The reconstruction of the moving hat $c_{\text{hat}}$ only achieves an $L^2([0,T], L^2)$ error of $66.29\%$, which is significantly higher than the error of $46.11\%$ when using the $5\times 5$ grid.
% This increase seems due to the fact that the support of $c_{\text{hat}}$ moves to the corner $(1,1)$ which is farthest from the sensors when $t$ increases.
For $c_{\text{plateau}}$ the results are more encouraging, $41.79\%$ compared to $34.48\%$.
The reconstruction quality suffers in particular for $t>1.5$, when the reconstruction vanishes in a neighborhood of the corner $(1,1)$ farthest from the sensors. This is of course a consequence of the finite speed of propagation.
For smaller $\epsilon = 10^{-4}$, $c_{\text{plateau}}$ is better approximated ($26.9\%$ error), but the error for $c_{\text{hat}}$ remains at $53.6\%$.

\begin{figure}[htb]%
  \centering
  \subfloat[$c_{k^*}(1.0, \cdot)$ for $c_{\text{hat}}$.]{
    \includegraphics{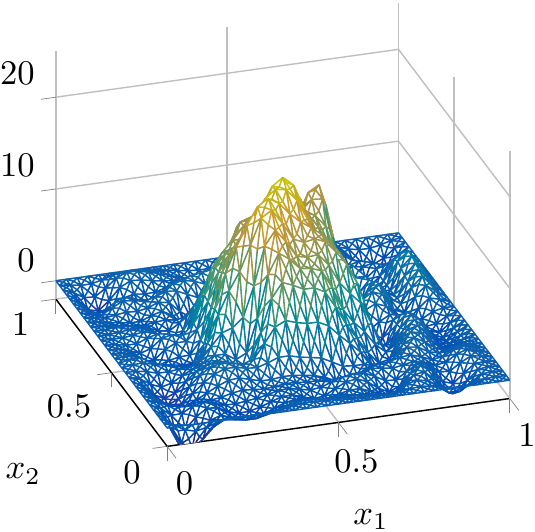}
  }
  \subfloat[$c_{k^*}(1.7, \cdot)$ for $c_{\text{hat}}$.]{
    \includegraphics{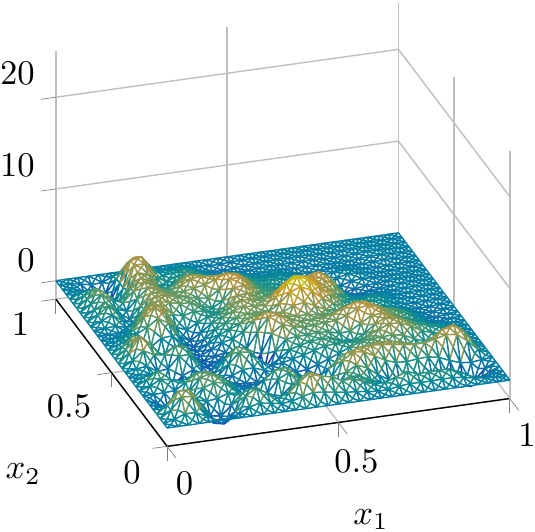}
  }

  \subfloat[$c_{k^*}(1.0, \cdot)$ for $c_{\text{plateau}}$.]{
    \includegraphics{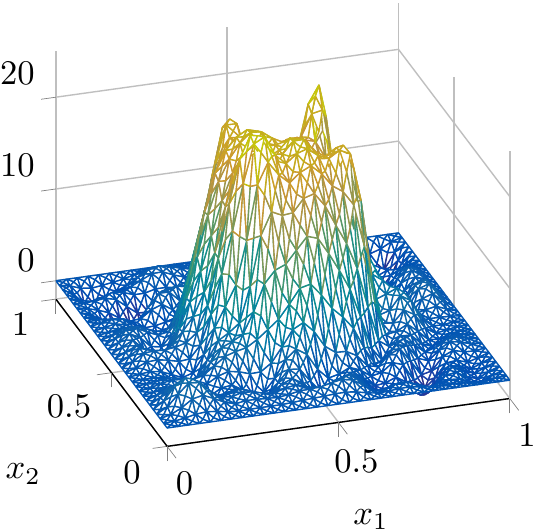}
  }
  \subfloat[$c_{k^*}(1.7, \cdot)$ for $c_{\text{plateau}}$.]{
    \includegraphics{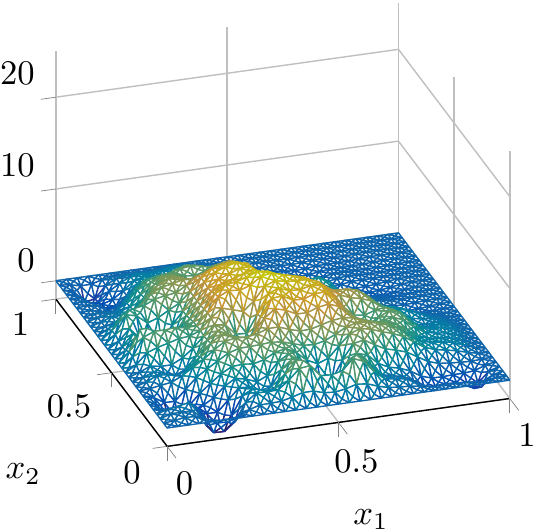}
  }
  \caption{Reconstructions from $g^\epsilon$ with $\epsilon=10^{-2}$ for $n=2$ when the sensors are distributed in an L-shape.}\label{abb:vergleich:rekonstrl}
\end{figure}%

We wish to remark that the discretization was chosen in such a way that computations for $n=3$ and small $\epsilon$ can be done in affordable time. Typical computing times range for $\epsilon = 10^{-2}$ ranged from $5$ seconds ($n=1$) to $25$ minutes ($n=3$) when measured on an Intel i7--2600 CPU and required between $50$ MiB and $750$ MiB of memory.

\FloatBarrier%
\printbibliography%

\end{document}